\newtheorem{theorem}{Theorem}[section]
\newtheorem{lemma}[theorem]{Lemma}
\newtheorem*{remark}{Remark}
\newcommand{\calg}{\mathcal{G}}
\newcommand{\caln}{\mathcal{N}}
\newcommand{\calu}{\mathcal{U}}
\newcommand{\calf}{\mathcal{F}}
\newcommand{\calk}{\mathcal{K}}
\newcommand{\calm}{\mathcal{M}}
\newcommand{\bR}{\mathbb{R}}
\newcommand{\E}{\mathbb{E}}
\newcommand{\be}{\begin{equation}}
\newcommand{\ee}{\end{equation}}
\date{}
\begin{document}
\title{Convergence Analysis for A Stochastic Maximum Principle Based Data Driven Feedback Control Algorithm}
\author{
Siming Liang \thanks{Department of Mathematics, Florida State University, Tallahassee, Florida, USA, \tt{sliang@fsu.edu}}, Hui Sun \thanks{Citi Bank, Delaware}, Richard Archibald \thanks{Devision of Computational Science and Mathematics, Oak Ridge National Laboratory, Oak Ridge, Tennessee, USA}, Feng Bao \thanks{Department of Mathematics, Florida State University, Tallahassee, Florida, USA } }
\maketitle

%\tableofcontents
\begin{abstract}
This paper presents convergence analysis of a novel data-driven feedback control algorithm designed for generating online controls based on partial noisy observational data. The algorithm comprises a particle filter-enabled state estimation component, estimating the controlled system's state via indirect observations, alongside an efficient stochastic maximum principle type optimal control solver. By integrating weak convergence techniques for the particle filter with convergence analysis for the stochastic maximum principle control solver, we derive a weak convergence result for the optimization procedure in search of optimal data-driven feedback control. Numerical experiments are performed to validate the theoretical findings.
\end{abstract}

\section{Introduction}
In this paper, we carry out numerical analysis demonstrating the convergence of a data-driven feedback control algorithm designed for generating online control based on partial noisy observational data. 

Our focus lies in the stochastic feedback control problem, which aims to determine optimal controls. These control actions are used to guide a controlled state dynamical system towards meeting certain optimality conditions, leveraging feedbacks from the system's current state. There are two practical challenges in solving the feedback control problem. First, when the control problem's dimension is high, the computational cost for searching the optimal control escalates exponentially. This is known as the ``curse of dimensionality''.  Second, in numerous scenarios, the state of the controlled system is not directly observable. Therefore, state estimation techniques become necessary to estimate the current state for designing optimal control, with observations gathered to aid in estimating the hidden state.

To address the aforementioned challenges, a novel online data-driven feedback control algorithm has been developed \cite{Bao_CiCP23, archibald2020efficient}. This algorithm introduces a stochastic gradient descent optimal control solver within the stochastic maximum principle framework to combat the high dimensionality issue in optimal control problems. Traditionally, stochastic optimal control problems are solved using dynamical programming or the stochastic maximum principle, both requiring numerical simulations for large differential systems \cite{Bellman1957, Feng_HJB_2013, Peng1990}. However, the stochastic maximum principle stands out for its capability to handle random coefficients in the state model and finite-dimensional terminal state constraints \cite{Yong_control}. In the stochastic maximum principle approach, a system of backward stochastic differential equations (BSDEs) are derived as the adjoint equation of the controlled state process. 
Then, the solution of the adjoint BSDE is utilized to formulate the gradient of the cost function with respect to the control process \cite{GPM_2017, Tang-1998}. However, solving BSDEs numerically entails significant computational costs, especially in high-dimensional problems, demanding a large number of random samples \cite{ZhangJ_BSDE, Zhao_multi}. To bolster efficiency, a sample-wise optimal control solver method has been devised \cite{Bao_EAJAM20}, where the solution of the adjoint BSDE is represented using only one realization or a small batch of samples. This approach justifies the application of stochastic approximation in the optimization procedure  \cite{Convergence-SGLD, Convergence-SGD}, and it shifts the computational cost from solving BSDEs to searching for the optimal control, thereby enhancing overall efficiency \cite{Bao_SNN}.

In data-driven feedback control, optimal filtering methods also play a pivotal role in dynamically estimating the state of the controlled system. Two prominent approaches for \textit{nonlinear} optimal filtering are the Zakai filter and the particle filter. While the Zakai filter aims to compute the conditional probability density function (pdf) for the target dynamical system using a parabolic type stochastic partial differential equation known as the Zakai equation \cite{zakai}. The particle filter, also known as a sequential Monte Carlo method, approximates the desired conditional pdf using the empirical distribution of a set of random samples (particles) \cite{particle-filter}. Although the Zakai filter theoretically offers more accurate approximations for conditional distributions, the particle filter is favored in more practical applications due to the high efficiency of the Monte Carlo method in approximating high-dimensional distributions \cite{MTAC2012}.

The aim of this study is to examine the convergence of the data-driven feedback control algorithm proposed in \cite{archibald2020efficient}, providing mathematical validation for its performance. While convergence in particle filter methods has been well studied \cite{MCMC-PF, crisan2002survey, Kunsch_PF}, this work adopts the analysis technique outlined in \cite{crisan2002survey} to establish weak convergence results for the particle filter regarding the number of particles. Analysis techniques for BSDEs alongside classical convergence results for stochastic gradient descent \cite{Bao_SNN, Bao_first} are crucial for achieving convergence in the stochastic gradient descent optimal control solver. The theoretical framework of this analysis merges the examination of particle filters with the analysis of optimal control, and the overarching objective of this paper is to derive a comprehensive weak convergence result for the optimal data-driven feedback control.

The rest of this paper is organized as follows. In Section 2, we introduce the data driven feedback control algorithm. Convergence analysis will be presented in Section 3, and in Section 4 we conduct multiple numerical experiments to validate our theoretical findings.

\section{An efficient algorithm for data driven feedback control}

We first briefly introduce the data driven feedback control problem that we consider in this work. Then, we shall describe our efficient algorithm for solving the data driven feedback control problem by using a stochastic gradient descent type optimization procedure for the optimal control. 

\subsection{Problem setting for the data driven optimal control problem.}

In probability space $(\Omega, \mathcal{F}, \mathbb{P})$, we consider the following augmented system on time interval $[0,T]$
\begin{equation}\label{XM}%\left\{\1n\ba{ll}
\begin{aligned}
d\begin{pmatrix}X_t\\ M_t\end{pmatrix} =\begin{pmatrix}b(t,X_t,u_t )\\ g(X_t)\end{pmatrix}dt  + \begin{pmatrix}\sigma(t,X_t,u_t)&0\\0&I\end{pmatrix}
d\begin{pmatrix}W_t\\ B_t\end{pmatrix}, \qquad
\begin{pmatrix}X_0 = \xi \\ M_0 = 0 \end{pmatrix},
\end{aligned}
\end{equation}
where $X : = \{X_t\}_{t=0}^T$ is the $\mathbb{R}^d$-dimensional controlled state process with dynamics $b: [0, T] \times \mathbb{R}^d \times \mathbb{R}^m \rightarrow \mathbb{R}^d$, $\sigma: [0, T] \times \mathbb{R}^d \times \mathbb{R}^m \rightarrow \mathbb{R}^{d \times q}$ is the diffusion coefficient for a $d$-dimensional Brownian motion $W$ that perturbs the the state $X$, and $u$ is an $m$-dimensional control process valued in some set $U$ that controls the state process $X$. In the case that the state $X$ is not directly observable, we have an observation process $M$ that collects partial noisy observations on $X$ with observation function $g: \mathbb{R}^d \rightarrow \mathbb{R}^p$, and $B$ is a $p$-dimensional Brownian motion that is independent from $W$. 

Let $\mathbb{F}^B=\{\mathcal{F}_t^B\}_{t \geq 0}$ be the filtration of $B$ augmented by all the $\mathbb{P}$-null sets in $\mathcal{F}$, and $\mathbb{F}^{W,B}\equiv\{\mathcal{F}^{W,B}_t\}_{t\geq 0}$ be the filtration generated by $W$ and $B$ (augmented by $\mathbb{P}$-null sets in $\mathcal{F}$). Under mild conditions, for any square integrable random variable $\xi$ independent of $W$ and $B$, and any $\mathbb{F}^{W,B}$-progressively measurable process $u$ (valued in $U$), Eq. \eqref{XM} admits a unique solution $(X,M)$ which is $\mathbb{F}^{W,B}$-adapted. Next, we let $\mathbb{F}^M=\{\mathcal{F}^M_t\}_{t\geq 0}$ be the filtration generated by $M$ (augmented by all the $\mathbb{P}$-null sets in $\mathcal{F}$). Clearly, $\mathbb{F}^M\subset \mathbb{F}^{W,B}$, and $\mathbb{F}^M \ne \mathbb{F}^W $, $\mathbb{F}^M\ne\mathbb{F}^B$, in general. The $\mathbb{F}^M$ progressively measurable control process, denoted by $u^M$, are control actions driven by the information contained in observational data.

We introduce the set of data driven admissible controls as
$$\mathcal{U}_{ad}[0, T] = \left\{u^{M} : [0, T] \times \Omega  \rightarrow U \subset \mathbb{R}^m \big| u^M \text{ is } \mathbb{F}^M- \text{progressively measurable} \right\},$$
and the cost functional that measures the performance of data driven control $u^M$ is defined as
\begin{equation}\label{cost}
J(u^M) = \E\left[ \int_{0}^{T} f(t, X_t, u_t^M) dt + h(X_T) \right],
\end{equation}
where $f$ is the running cost, and $h$ is the terminal cost.

The goal of the data driven feedback control problem is to find the optimal data driven control $u^{*} \in \mathcal{U}_{ad}[0, T]$ such that
\begin{equation}
J(u^{\ast}) = \inf_{u^M \in \mathcal{U}_{ad}[0, T] } J(u^M).
\end{equation}

\subsection{The algorithm for solving the data driven optimal control problem.}
%Describe the algorithm based on the JSC paper. The algorithm and the proof must be consistent. Algorithm from \cite{archibald2020backward} 

To solve the data driven feedback control problem, we will use the algorithm from \cite{archibald2020efficient}, which is derived from the stochastic maximum principle.

\subsubsection{The optimization procedure for optimal control}

 When the optimal control $u^{\ast}$ is in the interior of  $\mathcal{U}_{ad}$, the gradient process of the cost functional $J^{\ast}$ with respect to the control process on time interval $t \in [0,T]$ can be derived using the Gâteaux derivative of $u^{\ast}$ and the stochastic maximum principle in the following form:

\begin{equation}
\label{PFSGD:EQ5}
    (J^{\ast})_u^{'}(u_t^{\ast})= E\left[b_u(t,X_t^{\ast},u_t^{\ast})^{\top}Y_t +  \sigma_u(t,X_t^{\ast},u_t^{\ast})^{\top}Z_t + f_u(t,X_t^{\ast},u_t^{\ast})^{\top} | \mathcal{F}_t^M \right],
\end{equation}
where stochastic processes $Y$ and $Z$ are solutions of the following forward backward stochastic differential equations (FBSDEs) system:
\begin{equation}
\label{PFSGD:EQ6}
\begin{cases}
dX_t^{\ast} = b(t,X_t^{\ast},u_t^{\ast})dt + \sigma (t,X_t^{\ast},u_t^{\ast}) dW_t, \hfill X_0 = \xi\\ 
dM_t^{\ast} = \phi(X_t)dt + dB_t, \hfill M_0 = 0  \\
dY_t = (-b_x(t,X_t^{\ast},u_t^{\ast})^{\top}Y_t-\sigma_x(t,X_t^{\ast},u_t^{\ast})^{\top}Z_t-f_x(t,X_t^{\ast},u_t^{\ast})^{\top})dt\\ 
\hspace{5em}+ Z_t dW_t + \zeta_t dB_t,  \hfill Y_T=(g_x(X_T))^{\top} 
\end{cases}
\end{equation}
where $Z$ is the martingale representation of $Y$ with respect to $W$ and $\zeta$ is the martingale
representation of $Y$ with respect to $B$ \cite{Bao_DCDS_15}.

To solve the data driven feedback optimal control problem, we also use gradient descent type optimization and the gradient process $(J^{\ast})_u^{'}$ is defined in (\ref{PFSGD:EQ5}). Then, we can use the following gradient descent iteration to find the optimal control $u_t^{\ast}$ at any time instant $t \in [0,T]$
\begin{equation}
\label{PFSGD:EQ7}
u_t^{l+1,M} = u_t^{l,M} - r(J^{\ast})_u^{'}(u_t^{l,M}), \;\;\;\; l= 0,1,2,\dots,
\end{equation}
where $r$ is the step-size for the gradient. We know that the observational information $\mathcal{F}_t^M$ is gradually increased as we collect more and more data over time. Therefore, at a certain time instant $t$, we target on finding the optimal control $u_t^{\ast}$ with accessible information $\mathcal{F}_t^M$. Since the evaluation for $(J^{\ast})_u^{'}(u_t^{l,M})$ requires trajectories ${(Y_s,Z_s)}_{t \leq s \leq T}$ as $Y_t$ and $Z_t$ are solved backwards from T to t, we take conditional expectation $E[\cdot |\mathcal{F}_t^M ]$ to the gradient process $\{(J^{\ast})_u^{'}(u_t^{l,M})\}_{t \leq s \leq T}$, i.e.,
\begin{equation}
\label{PFSGD:EQ8}
\begin{aligned}
E[(J^{\ast})_u^{'}(u_s^{l,M})|\mathcal{F}_t^M ] &= E\Big[ b_u(s,X_s,u_s^{l,M})^{\top}Y_s +  \sigma_u(s,X_s,u_s^{l,M})^{\top}Z_s \\ &+ f_u(s,X_s,u_s^{l,M})^{\top} |\mathcal{F}_t^M \Big], \;\;\;\;\;\;\;\; s \in [t,T],
\end{aligned}
\end{equation}
where $X_s$, $Y_s$ and $Z_s$ are corresponding to the estimated control $u_s^{l,M}$. For the
gradient descent iteration (\ref{PFSGD:EQ7}) on the time interval $[t, T ]$, by taking conditional expectation $E[\cdot |\mathcal{F}_t^M ]$, we obtain 
\begin{equation}
\label{PFSGD:EQ9}
E[u_s^{l+1,M}|\mathcal{F}_t^M ] = E[u_s^{l,M}|\mathcal{F}_t^M ] - rE[(J^{\ast})_u^{'}(u_s^{l,M})|\mathcal{F}_t^M ], \;\;\;\; l= 0,1,2,\dots \;\;\; s \in [t,T].
\end{equation}
When $s > t$, the observational information $\{\mathcal{F}_s^M\}_{t \leq s \leq T}$ is not available at time t. We use conditional expectation $E[u_s^{l+1,M}|\mathcal{F}_t^M ]$ to replace $u_s^{l,M}$ since it provides the best approximation for $u_s^{l,M}$ given the current observational information $\mathcal{F}_t^M$. We denote
\begin{equation*}
    u_s^{l,M}|_t := E[u_s^{l+1,M}|\mathcal{F}_t^M ]
\end{equation*}
and then the gradient descent iteration is
\begin{equation}
\label{PFSGD:EQ10}
u_s^{l+1,M}|_t = u_s^{l,M}|_t - rE[(J^{\ast})_u^{'}(u_s^{l,M}|_t)|\mathcal{F}_t^M ], \;\;\;\; l= 0,1,2,\dots \;\;\; s \in [t,T],
\end{equation}
where $E[(J^{\ast})_u^{'}(u_s^{l,M}|_t)|\mathcal{F}_t^M ]$ can be obtained by solving the following FBSDEs
\begin{equation}
\label{PFSGD:EQ11}
\begin{aligned}
dX_t &= b(s,X_s,u_s^{l,M}|_t)ds + \sigma (s,X_s,u_s^{l,M}|_t) dW_s, \hspace{13em} s \in [t,T] \\
dY_t &= (-b_x(s,X_s,u_s^{l,M}|_t)^{\top}Y_s -\sigma_x(s,X_s,u_s^{l,M}|_t)^{\top}Z_s -f_x(s,X_s,u_s^{l,M}|_t)^{\top})ds \\ 
& \hspace{5em}+ Z_s dW_s + \zeta_s dB_s,  \hspace{15em} Y_T=(g_x(X_T))^{\top} 
\end{aligned}
\end{equation}
and evaluated effectively using several well established numerical algorithms \cite{Bao_AA20, Bao_Split, BSDE_filter}. 

When the controlled dynamics and the observation function $\phi$ are nonlinear, we will use optimal filtering techniques to obtain the conditional expectation $E[\Psi(s) |\mathcal{F}_t^M ]$. Before applying the particle filter method, which is one of the most important particle based optimal filtering methods, we define 
\begin{equation}
\label{PFSGD:EQ12}
\Psi(s,X_s,u_s^{l,M}|_t):=b_u^{\top}(s,X_s,u_s^{l,M}|_t) Y_s +  \sigma_u^{\top}(s,X_s,u_s^{l,M}|_t)Z_s
+f_u^{\top}(s,X_s,u_s^{l,M}|_t)
\end{equation}
for $s \in [t,T]$. With the conditional probability density function (pdf) $p(X_t|\mathcal{F}_t^M)$ that we obtain through optimal filtering methods and the fact that $\Psi(s,X_s,u_s^{l,M}|_t)$ is a stochastic process depending on the state of
random variable $X_t$, the conditional gradient process $E[(J^{\ast})_u^{'}(u_t^{l,M}|_t)|\mathcal{F}_t^M ]$ in (\ref{PFSGD:EQ8}) can be obtained by the following integral
\begin{equation}
\label{PFSGD:EQ13}
E[(J^{\ast})_u^{'}(u_s^{l,M}|_t)|\mathcal{F}_t^M ] = \int_{\mathbb{R}^d} E[\Psi(s,X_s,u_s^{l,M}|_t)|X_t=x] \cdot p(x|\mathcal{F}_t^M) dx, \hspace{2em}
s \in [t,T].
\end{equation}

\subsubsection{Numerical Approach for Data Driven Feedback Control by PF-SGD}
For the numerical framework, we need the temporal partition $\Pi_{N_T}$
$$\Pi_{N_T}=\{t_n, 0=t_0 < t_1 < \cdots< t_{N_T}=T\},$$
and we use the control sequence $\{u_{t_n}^{\ast}\}_{n=1}^{N_T}$ to represent the control process $u^{\ast}$ over the time interval $[0, T]$.

\vspace{0.5cm}
\hspace{-0.72cm} \textbf{Numerical Schemes for FBSDEs}

For the FBSDEs system, we adopt the following schemes:
\begin{equation}
\begin{aligned}
\label{PF-FBSDE:EQ14}
&X_{i+1} = X_{i} + b(t_{i},X_{i},u_{t_{i}}^{l,M}|_{t_n})\triangle t_i + \sigma(t_{i},X_{i},u_{t_{i}}^{l,M}|_{t_n})\triangle W_{t_{i}}, \\
&Y_{i} = E_i[Y_{i+1}] + E_i[b_x(t_{i+1},X_{i+1},u_{t_{i+1}}^{l,M}|_{t_n})^{\top}Y_{i+1}\\ 
& \;\;\;\;\;\;\;\;\;\;\;\;\;\;\;\;\;\;\;\;\;\;\; +\sigma_x(t_{i+1},X_{i+1},u_{t_{i+1}}^{l,M}|_{t_n})^{\top}Z_{i+1}+f_x(t_{i+1},X_{i+1},u_{t_{i+1}}^{l,M}|_{t_n})^{\top}] \triangle t_i, \\
&Z_{i} =\frac{1}{\triangle t_i} E_i[Y_{i+1}\triangle W_{t_i}], 
\end{aligned}
\end{equation}
where $X_{i+1}, Y_{i}, \;\text{and} \;Z_{i}$ are numerical approximations for $X_{t_{i+1}}, Y_{t_{i}}, \;\text{and} \; Z_{t_{i}}$, respectively.

Then, the standard Monte Carlo method can approximate expectations with K random samples:
\begin{equation}
\begin{aligned}
\label{PF-FBSDE:EQ15}
&X_{i+1}^k = X_{i} + b(t_{i},X_{i},u_{t_{i}}^{l,M}|_{t_n})\triangle t_i + \sigma(t_{i},X_{i},u_{t_{i}}^{l,M}|_{t_n})\sqrt{\triangle t_i}\omega_i^k, k=1,2, \dots, K,\\
&Y_{i} =\sum_{k=1}^K \frac{Y_{i+1}^k}{K} + \frac{\triangle t_i}{K} \sum_{k=1}^K [b_x(t_{i+1},X_{i+1}^k,u_{t_{i+1}}^{l,M}|_{t_n})^{\top}Y_{i+1}^k\\ 
& \;\;\;\;\;\;\;\;\;\;\;\;\;\;\;\;\;\;\;\;\;\;\; +\sigma_x(t_{i+1},X_{i+1}^k,u_{t_{i+1}}^{l,M}|_{t_n})^{\top}Z_{i+1}^k+f_x(t_{i+1},X_{i+1}^k,u_{t_{i+1}}^{l,M}|_{t_n})^{\top}], \\
&Z_{i} =\frac{1}{\triangle t_i} \sum_{k=1}^K \frac{Y_{i+1}^k \sqrt{\triangle t_i}\omega_i^k}{K},
\end{aligned}
\end{equation}
where $\{ \omega_i^k \}_{k=1}^K$ is a set of random samples following the standard Gaussian distribution that
we use to describe the randomness of $\triangle W_{t_i}$.

The above schemes solve the FBSDE system (\ref{PFSGD:EQ6}) as a recursive algorithm, and the convergence of the schemes is well studied — cf. (\cite{bao2011numerical, Bao_first, Bao_CiCP18}, and \cite{zhao2017high}).

\vspace{0.5cm}
\hspace{-0.72cm} \textbf{Particle Filter Method for Conditional Distribution}

To apply the particle filter method, we consider the controlled process on time interval $[t_{n-1},t_{n}]$
\begin{equation}
\label{PFSGD:EQ14}
X_{t_{n}} = X_{t_{n-1}} + \int_{t_{n-1}}^{t_{n}} b(s,X_s,u_s)ds + \int_{t_{n-1}}^{t_{n}} \sigma(s,X_s,u_s)dW_s
\end{equation}

Assume that at time instant $t_{n-1}$, we have $S$ particles, denoted by $\{x_{n-1}^{(s)}\}_{s=1}^{S}$, that follow an empirical distribution $\pi(X_{t_{n-1}}|\mathcal{F}_{t_{n-1}}^M) := \frac{1}{S}\sum_{s=1}^{S}\delta_{x_{n-1}^{(s)}}(X_{t_{n-1}})$ as an approximation for $p(X_{t_{n-1}}|\mathcal{F}_{t_{n-1}}^M)$. The prior pdf that we want to find in the prediction stage is approximated as
\begin{equation}
\label{PFSGD:EQ15}
\Tilde{\pi}(X_{t_{n}}|\mathcal{F}_{t_{n-1}}^M) := \frac{1}{S}\sum_{s=1}^{S}\delta_{\Tilde{x}_{n}^{(s)}}(X_{t_{n}})
\end{equation}
where $\Tilde{x}_{n}^{(s)}$ is sampled from $\pi(X_{t_{n-1}}|\mathcal{F}_{t_{n-1}}^M) p(X_{t_{n}}|X_{t_{n-1}})$ and $p(X_{t_{n}}|X_{t_{n-1}})$ is the transition probability derived from the state dynamics (\ref{PFSGD:EQ14}). As a result, the sample cloud $\{\Tilde{x}_{n}^{(s)}\}_{s=1}^{S}$ provides an approximate distribution for the prior $p(X_{t_{n}}|\mathcal{F}_{t_{n-1}}^M)$. Then, in the update
stage, we have
\begin{equation}
\label{PFSGD:EQ16}
\Tilde{\pi}(X_{t_{n}}|\mathcal{F}_{t_{n}}^M) :=
\frac{\sum_{s=1}^{S} \delta_{\Tilde{x}_{n}^{(s)}}(X_{t_{n}}) p(M_{t_{n}}|\Tilde{x}_{n}^{(s)})}{\sum_{s=1}^{S}p(M_{t_{n}}|\Tilde{x}_{n}^{(s)})}= \sum_{s=1}^{S} w_{n}^{(s)}  \delta_{\Tilde{x}_{n}^{(s)}}(X_{t_{n}})
\end{equation}
In this way, we obtain a weighted empirical distribution $\Tilde{\pi}(X_{t_{n}}|\mathcal{F}_{t_{n}}^M)$ that approximates the
posterior pdf $p(X_{t_{n}}|\mathcal{F}_{t_{n}}^M)$ with the importance density weight $w_{n}^{(s)} \propto p(M_{t_{n}}|\Tilde{x}_{n}^{(s)})$ \cite{Bao_parameter, Kang-PF, particle-filter}. Then, to avoid 
degeneracy problem, we need the resampling step. So, we have
\begin{equation}
\label{PFSGD:EQ17}
\pi(X_{t_{n}}|\mathcal{F}_{t_{n}}^M) = \frac{1}{S} \sum_{s=1}^{S} \delta_{x_{n}^{(s)}}(X_{t_{n}})
\end{equation}

Then, we combine the numerical schemes for the adjoint FBSDEs
system (\ref{PFSGD:EQ11})  and the particle filter algorithm
to formulate an efficient stochastic optimization algorithm to solve for the optimal control process $u^{\ast}$

\vspace{0.5cm}
\hspace{-0.72cm} \textbf{Stochastic Optimization for Control Process}

In this subsection, we combine the numerical schemes for the adjoint FBSDEs system (\ref{PFSGD:EQ11})  and the particle filter algorithm
to formulate an efficient stochastic optimization algorithm to solve for the optimal control process $u^{\ast}$.

On a time instant $t_n \in \Pi_{N_T}$, we have 
\begin{equation}
\label{PFSGD:EQ--13}
E[(J^{\ast})_u^{'}(u_{t_i}^{l,M}|_{t_{n}})|\mathcal{F}_{t_{n}}^M ] = \int_{\mathbb{R}^d} E[\Psi({t_i},X_{t_i},u_{t_i}^{l,M}|_{t_{n}})|X_{t_{n}}=x] \cdot p(x|\mathcal{F}_{t_{n}}^M) dx,
\end{equation}

where $t_i \geq t_n$ is a time instant after $t_n$.

Then, we use the approximate solutions $(Y_i,Z_i)$ of FBSDEs from schemes (\ref{PF-FBSDE:EQ15}) to replace  $(Y_{t_i},Z_{t_i})$ and the conditional distribution $p(X_{t_{n}}|\mathcal{F}_{t_{n}}^M)$ is approximated by the empirical distribution $\pi (X_{t_{n}}|\mathcal{F}_{t_{n}}^M)$ obtained from the particle filter algorithm (\ref{PFSGD:EQ15}) - (\ref{PFSGD:EQ17}). Then, we can solve for the optimal control $u_{t_n}^{\ast}$ through the following gradient descent optimization iteration
\begin{equation}
\label{PFSGD:EQ--18}
\begin{aligned}
u_{t_i}^{l+1,M}|_{t_n} =u_{t_i}^{l,M}|_{t_n} &-r\frac{1}{S}\sum_{s=1}^{S} E \Big[b_u^{\top}(t_i,X_{t_{i}},u_{t_i}^{l,M}|_{t_n}) Y_{{i}} +  \sigma_u^{\top}(t_i,X_{t_{i}},u_{t_i}^{l,M}|_{t_n})Z_{{i}} \\ 
&+f_u^{\top}(t_i,X_{t_{i}},u_{t_i}^{l,M}|_{t_n})| X_{t_n} = x_n^{(s)}  \Big]
\end{aligned}
\end{equation}
Then, the standard Monte Carlo method can approximate expectation $E \Big[\cdot | X_{t_n} = x_n^{(s)}  \Big]$ by $\Lambda$ samples:
\begin{equation}
\label{PFSGD:EQ--19}
\begin{aligned}
u_{t_i}^{l+1,M}|_{t_n} \approx u_{t_i}^{l,M}|_{t_n} &-r\frac{1}{S}\frac{1}{\Lambda}\sum_{s=1}^{S} \sum_{\lambda=1}^{\Lambda} \Big[b_u^{\top}(t_i,X_{t_{i}}^{(\lambda,s)},u_{t_i}^{l,M}|_{t_n}) Y_{{i}} +  \sigma_u^{\top}(t_i,X_{t_{i}}^{(\lambda,s)},u_{t_i}^{l,M}|_{t_n})Z_{{i}} \\ 
&+f_u^{\top}(t_i,X_{t_{i}}^{(\lambda,s)},u_{t_i}^{l,M}|_{t_n}) | X_{t_n} = x_n^{(s)} \Big]
\end{aligned}
\end{equation}

We can see from the above Monte Carlo approximation that in order to approximate the expectation in one gradient descent iteration step, we need to generate $S \times \Lambda$ samples. This is even more
computationally expensive when the controlled system is a high dimensional process. 

So, we want to apply the idea of stochastic gradient descent (SGD) to improve the efficiency of classic gradient descent optimization and combines it with the particle filter method. Instead of using the fully calculated Monte Carlo simulation to approximate the
conditional expectation, we use only one realization of $X_{t_i}$ to represent the expectation. For the conditional distribution of the controlled process, we can use the particles to describe. So, we have
\begin{equation}
\label{PFSGD:EQ--0018}
\begin{aligned}
E[(J^{\ast})_u^{'}(u_{t_i}^{l,M}|_{t_{n}})|\mathcal{F}_{t_{n}}^M] \approx
b_u^{\top}(t_i,X_{t_{i}}^{(\hat{l},\hat{s})},u_{t_i}^{l,M}|_{t_n}) Y_{{i}}^{(\hat{l},\hat{s})} +  \sigma_u^{\top}(t_i,X_{t_{i}}^{(\hat{l},\hat{s})},u_{t_i}^{l,M}|_{t_n})Z_{{i}}^{(\hat{l},\hat{s})}
+f_u^{\top}(t_i,X_{t_{i}}^{(\hat{l},\hat{s})},u_{t_i}^{l,M}|_{t_n})
\end{aligned}
\end{equation}
where $l$ is iteration index, the index $\hat{l}$ indicates that the random generation of the controlled process varies among the gradient
descent iteration steps .$X_{t_{i}}^{(\hat{l},\hat{s})}$ indicates a randomly generated realization of the controlled process with a randomly selected initial state $X_{t_{n}}^{(\hat{l},\hat{s})} = x_n^{\hat{s}}$ from the particle cloud $\{x_n^{(s)}\}_{s=1}^S$.

Then, we have the following SGD schemes:
\begin{equation}
\label{PFSGD:EQ18}
\begin{aligned}
u_{t_i}^{l+1,M}|_{t_n} =u_{t_i}^{l,M}|_{t_n} &-r \Big(b_u^{\top}(t_i,X_{t_{i}}^{(\hat{l},\hat{s})},u_{t_i}^{l,M}|_{t_n}) Y_{{i}}^{(\hat{l},\hat{s})} +  \sigma_u^{\top}(t_i,X_{t_{i}}^{(\hat{l},\hat{s})},u_{t_i}^{l,M}|_{t_n})Z_{{i}}^{(\hat{l},\hat{s})} \\ 
&+f_u^{\top}(t_i,X_{t_{i}}^{(\hat{l},\hat{s})},u_{t_i}^{l,M}|_{t_n}) \Big)
\end{aligned}
\end{equation}
 $Y_{{i}}^{(\hat{l},\hat{s})}$ is the approximate solution $Y_i$ corresponding to the random sample $X_{t_{i}}^{(\hat{l},\hat{s})}$. And the path of $X_{t_{i}}^{(\hat{l},\hat{s})}$ is generated as following

\begin{equation}
\label{PFSGD:EQ19}
X_{t_{i+1}}^{(\hat{l},\hat{s})} = X_{t_{i}}^{(\hat{l},\hat{s})} + b(t_{i},X_{t_{i}}^{(\hat{l},\hat{s})},u_{t_i}^{l,M}|_{t_n})\triangle t_i + \sigma (t_{i},X_{t_{i}}^{(\hat{l},\hat{s})},u_{t_i}^{l,M}|_{t_n}) \sqrt{\triangle t_i} \omega_i^{(\hat{l},\hat{s})}    
\end{equation}
where $\omega_i^{(\hat{l},\hat{s})} \sim N(0,1)$. Then, an estimate for our desired data driven optimal
control at time instant $t_n$ is
\begin{equation*}
\hat{u}_{t_n} :=   u_{t_n}^{L,M}|_{t_n}
\end{equation*}
The scheme for FBSDE is 
\begin{equation}
\label{PFSGD:EQ20}
\begin{aligned}
&Y_{{i}}^{(\hat{l},\hat{s})} = Y_{{i+1}}^{(\hat{l},\hat{s})} + \Big[b_x(t_{i+1},X_{t_{i+1}}^{(\hat{l},\hat{s})},u_{t_i}^{l,M}|_{t_n})^{\top}Y_{{i+1}}^{(\hat{l},\hat{s})} + \sigma_x(t_{i+1},X_{t_{i+1}}^{(\hat{l},\hat{s})},u_{t_i}^{l,M}|_{t_n})^{\top}Z_{{i+1}}^{(\hat{l},\hat{s})}\\ 
& \;\;\;\;\;\;\;\;\;\;\;\;  + f_x(t_{i+1},X_{t_{i+1}}^{(\hat{l},\hat{s})},u_{t_i}^{l,M}|_{t_n})^{\top}\Big] \triangle t_i\\
&Z_{{i}}^{(\hat{l},\hat{s})} =\frac{Y_{{i+1}}^{(\hat{l},\hat{s})}\omega_i^{(\hat{l},\hat{s})}  }{\sqrt{\triangle t_i}}
\end{aligned}
\end{equation}
Then, we have the the following algorithm:
\begin{algorithm}
\caption{PF-SGD algorithm for data driven feedback control problem}
\label{alg2}
Initialize the particle cloud $\{x_0^{(s)}\}_{s=1}^{S} \sim \xi$ and the number of iteration $L \in \mathbb{N}$
\begin{algorithmic}
\While{$n=0,1,2,\cdots,N_T,$}
    \State Initialize an estimated control process $\{u_{t_i}^{(0,M)}|_{t_n}\}_{i=n}^{N_T}$ and a step-size $\rho$
    \For{SGD iteration steps $l=0,1,2,\cdots,L$,}
        \State Simulate one realization of controlled process $\{X_{t_{i+1}}^{(\hat{l},\hat{s})}|_{t_n}\}_{i=n}^{N_T-1}$ through scheme (\ref{PFSGD:EQ19}) with      
        \State $\{X_{t_{n}}^{(\hat{l},\hat{s})}\}=x_n^{(\hat{s})} \in \{x_n^{(s)}\}_{s=1}^{S}$;
        \State Calculate solution $\{Y_{t_{i}}^{(\hat{l},\hat{s})}|_{t_n}\}_{i=N_T}^{n}$ of the FBSDEs system (14) corresponding to          
        \State $\{X_{t_{i+1}}^{(\hat{l},\hat{s})}|_{t_n}\}_{i=n}^{N_T-1}$ through schemes (\ref{PFSGD:EQ20});
        \State Update the control process to obtain $\{u_{t_i}^{(l+1,M)}|_{t_n}\}_{i=n}^{N_T}$ through scheme (\ref{PFSGD:EQ18});
    \EndFor
    \State The estimated optimal control is given by $\hat{u}_{t_n}^{\ast} = u_{t_n}^{(L,M)}|_{t_n}$
    \State Propagate particles through the particle filter algorithm (\ref{PFSGD:EQ15}) - (\ref{PFSGD:EQ17}) to obtain $\{x_{n+1}^{(s)}\}_{s=1}^{S}$ 
    \State by using the estimated optimal control $\hat{u}_{t_n}^{\ast}$.
\EndWhile
\end{algorithmic}
\end{algorithm}

\eject
\iffalse
The update step (exact) should take the following form:
\begin{align}\label{exact_update}
	U_{n}^{l+1,M}\big|_{t_i}=U_{n}^{l,M} \big|_{t_i}- \eta_l \E^{\pi_{t_n|t_n}}[ J'^X_N (U_n^{l,M}) ] \Big|_{t_i} , \ \   \forall n \leq i \leq N 
\end{align}
where $X \sim \pi_{t_n|t_n}$.

We use $E^{\pi_{t_n|t_n}}[ J'_N (U^{l,M}) ] $ in place of $E[ J'^x_N (U^{l,M}) | \mathcal{F}_{t_n}^M ]$ for simplicity of notations, where the latter term was used in the original paper: 
\begin{align}
	E[ J'^x_N (U^{l,M}) | \mathcal{F}_{t_n}^M ] &:=\int \E[J'_N (U^{l,M}) | X_n=x] \cdot p(x | \mathcal{F}_{t_n}^M ) dx
\end{align}
The SGD version of the approximation for the algorithm takes the following form: 
\begin{align}
	U_{t_i}^{l+1, M}|_{t_n} = U_{t_i}^{l, M}|_{t_n} - \rho j'^{(\hat{l},\hat{s})}(U)|_{t_n}
\end{align} 
where $(\hat{l},\hat{s})$ stands for the random sample value with the initial value $x$ randomly drawn from the particle cloud. 

With the optimal control at time $n$, which is $u_{t_n}=U_{t_i}^{K_n, M}|_{t_n}$, we carry out the prediction step which is followed by the standard analysis step. 

We repeat the same process until we reach the terminal time.  
\fi

\section{Convergence analysis}

The goal of our convergence analysis is to show the convergence of the distribution of the state to the ``true state" under the temporal model discretization $N$. We also show that the estimated control to the ``true control"  To proceed, we first introduce our notations and the assumptions required in the proof in Section \ref{Notations}. Then, in Section \ref{Analysis}, we shall provide the main convergence theorems.

\subsection{Notations and assumptions}\label{Notations}

\textbf{Notations}
\begin{enumerate}
    \item We use $U_n:\lbrace t_n,..., T\rbrace \rightarrow \bR^d $ to denote the control process starting from time $t_n$ and ends at time $T$. 
    We use $$\mathcal{U}_n:=\lbrace U_n | U_n:\lbrace t_n,..., T \rbrace \rightarrow \bR^d, U_n \ \text{is $\calf^M_{t_n}$-adapted}\rbrace$$  to denote the collection of the admissible controls starting at time $t_n$. 
    
    \item We define the control at time $t_n$ to be $u_n:=U_n \big |_{t_n}$, the conditional distribution coming from a particle filter algorithm.  
	\item We define $\mu^N_n:= \pi^N_{t_n|t_n}$ where the superscript means that the measure is obtained through the particle filter method, and so it is random. 
	\item We use $\mathcal{S}^\mathcal{N}$ to denote the sampling operator:  $\pi^N_{t_n|t_n}\triangleq\frac{1}{N}\sum_{i=1}^{N}\delta_{x_t^{(i)}}$ and  $L_n$ to denote the updating step in the particle filter.We use $P^N_n$ to denote the transition operator (the prediction step) under the SGD-particle filter framework. And $P_n$ is the deterministic transition operator for the exact case (the control is exact in SDG). We mention ``deterministic" here to distinguish the case where the control $u_n$ may be random due to the SGD optimization algorithm.

	\item We use $\langle \cdot , \cdot \rangle$ to denote the deterministic $L_2$ inner product, i.e. if $f,g \in L^2([0,T]; \bR^d )$, then 
	\begin{align}
		\langle f , g \rangle := \int^T_0 f\cdot g  \ dt
	\end{align}
	\item We define $J'^x_N(U_n):=\E[J'_N(U_n)| X_n=x]$. 
We then have  $\E[J'^{X_n}_N(U_n)]:= \int \E[J'_N(U_n) \big| X_n=s] \ d \mu^N_n(s) $. We remark that $U_n$ is a process that starts from time $t_n$, and so $X_n$ is essentially the initial condition of the diffusion process. 
	\item We define the distance between two random measures to be the following:
	\begin{align}
		d(\mu,\nu):= \sup_{||f||_{\infty} \leq 1 } \sqrt{\E^{\omega}[|\mu^{\omega}f-\nu^{\omega}f|^2]} \label{dist_random_meas}
	\end{align}
	where the expectation is taken over the randomness of the measure. 
	\item We use the total variation distance between two deterministic probability measures $\mu,\nu$:
	\begin{align}
		d_{TV}(\mu,\nu):=\sup_{||f||_{\infty} \leq 1 } |\mu f-\nu f|
	\end{align}
	\item We use $K_n$ to denote the total number of iterations taken in the SGD algorithm at time $t_n$; we use $\caln$ to denote the total number of particles in the system. We use $C$ to denote a generic constant which may vary from line to line. 
	\item Abusing the notation, we will denote $J'^x_N(U_n)$ in the following way where the argument $U_n$ can be vector of any length $1 \leq n \leq N$:
	\begin{align}
	J'^x_N(U_n)\big|_{t_i}:=\E^{X_{t_n}=x}[f_u'(X_{t_i}, U_n \big|_{t_i} ) + b'^T_u(X_i, U_n \big |_{t_i}) Y_{t_i}]
\end{align}

\end{enumerate}

\textbf{Assumptions}

\begin{enumerate}
	\item We assume that $J'_N$ satisfy the following strong condition:  for any $x \in X$, there exist a constant  $\lambda > 0$ such that for all $U, V \in \calu_0$:
\begin{align}
	\lambda ||U-V||^2 \leq \langle J'^x_N(U)-J'^x_N(V), U-V\rangle  \label{strong_convexity}
\end{align}
Notice that\eqref{strong_convexity}  will imply that  such inequality is true for any $U_n,V_n \in \mathcal{U}_n $. And it can be seen from simply fixing all the $U_{n} |_{t_i}, V_{n} |_{t_i}$, $0 \leq i \leq n-1$ to be 0. 

This is a very strong assumption, one should consider relaxing it to \begin{align}
	\lambda ||U-V||^2 \leq \E^{\omega}[\E^{\mu_n, \cdot }[\langle J'^x_N(U),J'^x_N(V), U-V\rangle] ]\label{strong_convexity_2}
\end{align}
That is, this relation holds in expectation instead of point-wise

   \item both $b$ and $\sigma$ are deterministic and in $C_b^{2,2}(\mathbb{R}^d \times \mathbb{R}^m;\mathbb{R}^d)$ in space variable $x$ and control $u$.  
\item   $b, b_x, b_u, \sigma, \sigma_x, f_x, f_u $ are all uniformly lipschitz in $x,u$ and uniformly bounded.
		\item $\sigma$ satisfies the uniform elliptic condition. 
		%\item $f, f_x,  g_x$ are uniformly Hölder-$\frac{1}{2}$ continuous in $t$. 
	\item The initial condition $X_0:=\xi \in L^2(\calf_0)$. 
	\item The terminal (Loss) function $\Phi$ is $C^1$ and positive, and $\Phi_x$ has at most linear growth at infinity.
	\item We assume that the function $g_n$ (related to the Bayesian step) takes has the following bound: there exist $0 < \kappa <1$ such that $$\kappa \leq g_n \leq   \kappa^{-1}$$
\end{enumerate}

\subsection{The convergence theorem for the data driven feedback control algorithm}\label{Analysis}
Our algorithm combines the particle filter method and the stochastic gradient descent method. Lemma \ref{lemma_kody} (combine Lemma 4.7-4.9 from the book \cite{law2015data}) provides the convergence result for the particle filter method alone. It shows that each prediction and updating step is guaranteed to be convergent. 

Recall that $\mathcal{S}^\mathcal{N}$ is the sampling operator where we sample $\mathcal{N}$ particles. $P^N_n$ denotes the transition operator (the prediction step) under the SGD-particle filter framework. $P_n$ denotes the deterministic transition operator assuming that SGD gives the exact control. $L_n$ denotes the updating step in the particle filter method.
\begin{lemma}\label{lemma_kody}
We assume that there exists $\kappa \in (0,1]$. The following is true: 
 \begin{align}
 	\sup_{\mu \in \mathcal{P}(\mu)} d (S^{\mathcal{N}} \mu, \mu ) &\leq \frac{1}{\sqrt{{\mathcal{N}}}} \nonumber \\ 
 	d(P^N_n\mu, P^N_n\nu) &\leq d(\mu,\nu) \\
     d(P_n\mu, P_n\nu) &\leq  d(\mu,\nu) \nonumber\\ 
 	d(L_n \nu, L_n \mu) &\leq 2 \kappa^{-2} d (\nu,\mu)
 \end{align}
\end{lemma}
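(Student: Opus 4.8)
The plan is to establish the four inequalities of Lemma~\ref{lemma_kody} one at a time, treating them as essentially standard particle-filter estimates adapted to the present (random-control) setting; the only genuinely new ingredient is the bookkeeping needed because $P_n^N$ is a random operator, built from an SGD-generated control, while $P_n$ is its deterministic idealization. First I would recall the definition of the random-measure distance \eqref{dist_random_meas}, $d(\mu,\nu)=\sup_{\|f\|_\infty\le1}\sqrt{\E^\omega[|\mu^\omega f-\nu^\omega f|^2]}$, and the fact that for a deterministic measure $\mu$ the sampling operator $\mathcal{S}^{\mathcal{N}}$ produces $\mathcal{S}^{\mathcal N}\mu=\tfrac1{\mathcal N}\sum_{i=1}^{\mathcal N}\delta_{x^{(i)}}$ with the $x^{(i)}$ i.i.d.\ from $\mu$. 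For the sampling bound, I would fix $f$ with $\|f\|_\infty\le1$, note $\E^\omega[\mathcal{S}^{\mathcal N}\mu f]=\mu f$, and compute the variance: $\E^\omega[|\mathcal{S}^{\mathcal N}\mu f-\mu f|^2]=\tfrac1{\mathcal N}\operatorname{Var}(f(x^{(1)}))\le\tfrac1{\mathcal N}\E[f(x^{(1)})^2]\le\tfrac1{\mathcal N}$, and then take the supremum over $f$ to get $\sup_\mu d(\mathcal{S}^{\mathcal N}\mu,\mu)\le\mathcal N^{-1/2}$.

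Next I would treat the two transition (prediction) inequalities $d(P_n^N\mu,P_n^N\nu)\le d(\mu,\nu)$ and $d(P_n\mu,P_n\nu)\le d(\mu,\nu)$. The key observation is that a Markov transition operator acts on test functions by $(\mathcal{K}f)(x):=\int f(y)\,p(y\mid x)\,dy$, so $(P\mu)f=\mu(\mathcal Kf)$, and crucially $\|\mathcal Kf\|_\infty\le\|f\|_\infty$ since $p(\cdot\mid x)$ is a probability kernel. Hence for each realization $\omega$, $|(P\mu)^\omega f-(P\nu)^\omega f|=|\mu^\omega(\mathcal Kf)-\nu^\omega(\mathcal Kf)|$, and taking $\E^\omega$, square roots, and the supremum over $\|f\|_\infty\le1$ (noting $\mathcal Kf$ also has sup-norm $\le1$) yields the contraction. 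For $P_n^N$ the kernel is itself random (it depends on the SGD control $\hat u_{t_n}$, which is $\mathcal F^M_{t_n}$-measurable and hence part of $\omega$); but conditionally on that randomness it is still a genuine probability kernel, so the same argument goes through after conditioning — this is the step I expect to require the most care in getting the measurability/conditioning structure right, rather than any hard estimate.

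Finally, for the update (Bayes) step $d(L_n\nu,L_n\mu)\le2\kappa^{-2}d(\nu,\mu)$ I would use the explicit form $L_n\mu=\frac{g_n\cdot\mu}{\mu g_n}$, where $g_n$ is the likelihood function, together with Assumption~7, $\kappa\le g_n\le\kappa^{-1}$. For any $f$ with $\|f\|_\infty\le1$, write
\begin{align*}
(L_n\mu)f-(L_n\nu)f=\frac{\mu(g_nf)}{\mu g_n}-\frac{\nu(g_nf)}{\nu g_n}
=\frac{\mu(g_nf)-\nu(g_nf)}{\mu g_n}+\nu(g_nf)\Bigl(\frac{1}{\mu g_n}-\frac{1}{\nu g_n}\Bigr),
\end{align*}
bound $1/(\mu g_n)\le\kappa^{-1}$, $|\nu(g_nf)|\le\kappa^{-1}$, and $|1/(\mu g_n)-1/(\nu g_n)|=|\nu g_n-\mu g_n|/(\mu g_n\,\nu g_n)\le\kappa^{-2}|\nu g_n-\mu g_n|$. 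Since $g_nf$ and $g_n$ both have sup-norm $\le\kappa^{-1}$, after normalizing one recovers, for each $\omega$, a pointwise bound of the form $|(L_n\mu)^\omega f-(L_n\nu)^\omega f|\le 2\kappa^{-2}\sup_{\|\varphi\|_\infty\le1}|\mu^\omega\varphi-\nu^\omega\varphi|$-type quantity; squaring, taking $\E^\omega$, and then the supremum over $f$ gives $d(L_n\nu,L_n\mu)\le2\kappa^{-2}d(\nu,\mu)$. I would remark that all four bounds are uniform in the relevant parameters, so they can be composed along the filtering recursion, which is exactly what the subsequent convergence theorems will need.
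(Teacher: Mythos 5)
Your proposal takes a genuinely different route from the paper, because the paper does not actually prove Lemma \ref{lemma_kody}: it imports all four estimates wholesale from Lemmas 4.7--4.9 of \cite{law2015data} (the Monte Carlo sampling bound, the Markov-kernel contraction, and the Bayes-update Lipschitz bound). What you have written is a self-contained reconstruction of those proofs adapted to the distance \eqref{dist_random_meas}, and your first two arguments are exactly the standard ones: the conditional variance computation $\E^\omega[|\mathcal{S}^{\mathcal N}\mu f-\mu f|^2]=\tfrac1{\mathcal N}\operatorname{Var}(f(x^{(1)}))\le\tfrac1{\mathcal N}$, and the identity $(P\mu)f=\mu(\mathcal K f)$ with $\|\mathcal K f\|_\infty\le\|f\|_\infty$. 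Your caveat about $P^N_n$ is well placed: since the supremum over test functions in \eqref{dist_random_meas} sits outside $\E^\omega$, and $\mathcal K^\omega f$ varies with $\omega$ when the kernel depends on the SGD-generated control, the contraction for $P^N_n$ does not follow verbatim from the deterministic argument; one must condition on the control randomness (or enlarge the supremum to $\omega$-dependent test functions of norm at most one). The paper sidesteps this by citation, and in its later use (inequality \eqref{first_ineq} and Lemma \ref{onestep}) the quantity $d(P^N_n\mu^{N,\cdot}_n,P_n\mu^{N,\cdot}_n)$ is estimated separately anyway, so only the deterministic contraction is load-bearing. The benefit of your route is that the lemma becomes verifiable within the paper's own framework rather than resting on a reference whose hypotheses and distance are not literally identical to those used here.

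The one concrete repair needed is in your Bayes step. With the decomposition you wrote and the individual bounds $|\nu(g_nf)|\le\kappa^{-1}$ and $|1/(\mu g_n)-1/(\nu g_n)|\le\kappa^{-2}|\nu g_n-\mu g_n|\le\kappa^{-3}\sup_{\|\varphi\|_\infty\le1}|\mu\varphi-\nu\varphi|$, the second term contributes a factor $\kappa^{-4}$, so you only obtain $d(L_n\mu,L_n\nu)\le 2\kappa^{-4}d(\mu,\nu)$, which is strictly weaker than the stated $2\kappa^{-2}$ when $\kappa<1$. The standard fix (and the one in \cite{law2015data}) is to regroup the second term as $\frac{\nu(g_nf)}{\nu g_n}\cdot\frac{\nu g_n-\mu g_n}{\mu g_n}$ and bound $\bigl|\nu(g_nf)/\nu g_n\bigr|=|(L_n\nu)f|\le 1$ because $L_n\nu$ is a probability measure; then each of the two terms is at most $\kappa^{-1}\cdot\kappa^{-1}\sup_{\|\varphi\|_\infty\le1}|\mu\varphi-\nu\varphi|$ (using that $\kappa g_nf$ and $\kappa g_n$ are admissible test functions), and the constant $2\kappa^{-2}$ follows after the usual $(a+b)^2\le 2a^2+2b^2$ step under $\E^\omega$.
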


Given Lemma \ref{lemma_kody}, Theorem 4.5 in \cite{law2015data} tells us the particle filter framework is convergent. Then, following Lemma \ref{lemma_kody}, we can have the distance between the true distribution of the state and the estimated distribution through the SGD-particle filter framework.

\begin{align}
	d(\mu^{N,\cdot}_{n+1}, \mu_{n+1})&\equiv d(L_n S^\mathcal{N} P^N_n \mu^{N,\cdot}_n, L_nP_n\mu_n) \\ 
	& \leq d(L_n S^\mathcal{N} P^N_n \mu^{N,\cdot}_n, L_n S^\mathcal{N} P_n\mu_n)+ d(L_n S^\mathcal{N} P\mu_n ,L_nP_n\mu_n) \nonumber \\
	& \leq 2 \kappa^{-2} \Big( \frac{2}{\sqrt{\mathcal{N}}} +d(P^N_n \mu^{N,\cdot}_n,  P_n\mu^{N,\cdot}_n) + d(S^\mathcal{N} P_n \mu_n^{N,\cdot}, P_n \mu_n) \Big) \nonumber \\ 
	& \leq  2 \kappa^{-2} \Big( \frac{3}{\sqrt{\mathcal{N}}} +d(P^N_n \mu^{N,\cdot}_n,  P_n\mu^{N,\cdot}_n) + d(\mu_n^{N,\cdot}, \mu_n) \Big) \label{first_ineq}
\end{align}
where in the above inequalities, we have used triangle inequalities and lemma \ref{lemma_kody}.

Hence, if we can show that the inequality of the following form holds
\begin{align} \label{intuitive_res}
	d(P^N_n \mu^{N,\cdot}_n,  P_n\mu^{N,\cdot}_n) \leq C_n d(\mu_n^{N,\cdot}, \mu_n)+ \epsilon_n
\end{align}
for some constant $C_n$ and $\epsilon_n$ that we can tune,
then by recursion, we can show that by using \eqref{first_ineq} the convergence holds true.
\begin{remark}
	We point out that the difficulty lies in showing \eqref{intuitive_res}. 
	Recall that the distance between two random measures is defined in \eqref{dist_random_meas} which involves testing the overall measurable function bounded by $1$. However, we will see later that it is more desirable to test against Lipschitz functions. Hence, since the underlying measure is a finite Borel probability measure, we want to identify the function first with a continuous function on a compact set (\textit{Lusin}). Then we approximate this continuous function uniformly by a Lipschitz function since now the domain is compact. This way, we can roughly show that a form close to \eqref{intuitive_res} is true. 
	\end{remark}

\begin{remark}
Notice that the first measure in $d(P^N_n \mu^{N,\cdot}_n,  P_n\mu^{N,\cdot}_n)$ has two sources of randomness: the randomness in $P_n$ which comes from the SGD algorithm used to find the control, and the randomness in the measure $\mu^{N}_n$. However, we do not distinguish the two when we take the expectation. 
\end{remark}

%%%%%%%%%%%%%%%%%%%%%%%%%%%%%%%%%%%%%%%%%%%%%%%%%%%%%
To proof the convergence, we need to create a subspace where all particles $X_i$ (obtained from the particle filter method) at any time $n$ are within this bounded subspace. Or we can relax it to the statement that the probability of any particles $X_i$ escaping from a very large region is very small. Lemma \ref{key_bound} shows we can restrict the particles to a compact subspace with the radius $M$ by starting from any particels and any admissible control $U_0$. 

\begin{lemma} \label{key_bound}
	There exists M and constant $C$, such that under any admissible control $U_0$
	\begin{align} 
		\mathbb{P}(\sup_{\substack{ {i \in \lbrace 1, ... , N \rbrace}\\ {U_0 \in \mathcal{U}}}}|X_i| \geq M) \leq \frac{C}{M^2}, \ \ X_i \sim \pi^N_{t_i | t_{i-1}} \ \textit{or} \ \   X_i \sim \pi^N_{t_i | t_i}
	\end{align}
\end{lemma}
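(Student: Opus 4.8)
The plan is to reduce the tail estimate to a uniform second‑moment bound and then apply Chebyshev's inequality. Concretely, I would prove that there is a constant $C$ — depending on $\|b\|_\infty,\|\sigma\|_\infty$, the dimensions, the number of particles $\caln$, the horizon $T$ and the partition, and on $\E|\xi|^2$, but \emph{not} on the admissible control $U_0$ — such that
\[
\E\Big[\sup_{1\le i\le N}\ \max_{1\le s\le \caln}\big|x_i^{(s)}\big|^2\Big]\ \le\ C,
\]
where $\{x_i^{(s)}\}_s$ is either the predicted cloud (sampled from $\tilde\pi^N_{t_i|t_{i-1}}$) or the filtered cloud (from $\pi^N_{t_i|t_i}$). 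Since any $X_i\sim\pi^N_{t_i|t_{i-1}}$ or $X_i\sim\pi^N_{t_i|t_i}$ satisfies $|X_i|\le \max_s|x_i^{(s)}|$, Chebyshev gives $\PP(\sup_i|X_i|\ge M)\le C/M^2$. The control‑independence of $C$ is exactly what makes the supremum over $U_0\in\calu$ harmless: if all controls are driven by the same Brownian increments and the same resampling randomness, the bound above holds pathwise with a single deterministic $C$, so the supremum over controls can be moved inside.

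For the second‑moment bound I would set up a one‑step recursion along the particle filter. The resampling/update steps \eqref{PFSGD:EQ15}–\eqref{PFSGD:EQ17} only redistribute mass among already existing particles, so $\operatorname{supp}\pi^N_{t_i|t_i}\subseteq\operatorname{supp}\tilde\pi^N_{t_i|t_i}$ and hence $\max_s|x_i^{(s)}|\le\max_s|\tilde x_i^{(s)}|$ (alternatively, the two‑sided likelihood bound $\kappa\le g_n\le\kappa^{-1}$ from the assumptions of Section \ref{Notations} bounds the weights by $(\caln\kappa^2)^{-1}$ and controls the expected weighted second moment by $\kappa^{-2}$ times the unweighted one). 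For the prediction step, each $\tilde x_{i+1}^{(s)}$ is obtained from its ancestor among $\{x_i^{(s')}\}_{s'}$ by one step of the controlled dynamics \eqref{PFSGD:EQ14} (or its Euler scheme \eqref{PFSGD:EQ19}); since $b$ and $\sigma$ are bounded \emph{uniformly in $x$ and $u$}, It\^o's isometry / the Burkholder–Davis–Gundy inequality for the martingale part together with a Cauchy–Schwarz estimate of the drift part yield, conditionally on $\calf_{t_i}$,
\[
\E\big[|\tilde x_{i+1}^{(s)}|^2\,\big|\,\calf_{t_i}\big]\ \le\ (1+C\triangle t_i)\,\max_{s'}|x_i^{(s')}|^2 + C\triangle t_i,
\]
with $C$ independent of the control. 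Combining the prediction and the (non‑expanding) update/resampling steps and a union bound over the $\caln$ particles gives a recursion of the form $A_{i+1}\le(1+C\triangle t_i)A_i+C\triangle t_i+ (\text{increment term})$ for $A_i:=\E[\max_s|x_i^{(s)}|^2]$, which a discrete Gr\"onwall argument closes; the initial value $A_0=\E[\max_s|x_0^{(s)}|^2]\le\caln\,\E|\xi|^2$ is finite because $\xi\in L^2(\calf_0)$.

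The main obstacle is obtaining the bound with $C$ (hence $M$) \emph{independent of the time‑discretization level $N$}. A naive triangle inequality on the prediction step, $\max_s|\tilde x_{i+1}^{(s)}|\le\max_s|x_i^{(s)}|+\max_s|\text{increment}^{(s)}|$, destroys the cancellation of Brownian increments and leaves a spurious factor growing like $\sqrt N$ after summing over the $N$ steps. To avoid this one must exploit that, \emph{along each genealogical lineage of the particle system}, the successive increments telescope exactly as in a single bounded‑coefficient diffusion, so that the relevant quantity behaves like $\E[\sup_{[0,T]}|X_t|^2]\le C(1+\E|\xi|^2)$ uniformly in $N$, with only the finitely many resampling jumps (controlled by the finite number of particles and, if needed, by the likelihood bound $\kappa\le g_n\le\kappa^{-1}$) contributing extra, $N$‑independent, terms. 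Once this $N$‑uniform a priori estimate is in place, the remaining points — the union bound over particles, and the verification that every constant is genuinely uniform over $U_0\in\calu$ thanks to the uniform boundedness of $b,\sigma,b_x,\sigma_x,\dots$ — are routine, and Chebyshev delivers the stated conclusion.
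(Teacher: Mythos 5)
Your overall skeleton --- a second-moment recursion through the prediction/resampling/update steps, uniformity in the control via the uniform boundedness of $b$ and $\sigma$, the two-sided likelihood bound $\kappa\le g_n\le\kappa^{-1}$ for the Bayesian step, and Chebyshev at the end --- matches the paper. But the quantity you choose to control creates a genuine gap. You bound $\E\bigl[\max_{1\le s\le\caln}|x_i^{(s)}|^2\bigr]$, the worst particle, which forces a union bound over the cloud and an initial estimate $A_0\le\caln\,\E|\xi|^2$; your constant $C$ therefore depends on the number of particles $\caln$. The paper instead controls $\E^{\omega}\E^{\mu^{N,\omega}_i}[|X|^2]$, the mean second moment of the (random) empirical measure: i.i.d.\ resampling preserves this quantity exactly in expectation (conditioning on the pre-sampling $\sigma$-algebra gives $\E[X^2]=\E[\tilde X^2]$ with no factor of $\caln$), the Bayesian update multiplies it by at most $\kappa^{-2}$, and the one-step prediction gives the standard $(1+\Delta t)$ estimate, yielding the recursion $C_{n+1}=\kappa^{-2}(1+\Delta t)C_n+C\Delta t$ with all constants free of $\caln$. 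This distinction is not cosmetic: downstream (Lemmas \ref{SGD_proof} and \ref{onestep}, Theorem \ref{theo:convergence}) the radii $M_n$ are fixed first and then $\caln\to\infty$, so a bound of the form $C(\caln)/M^2$ cannot be used there. Note also that the lemma is only ever invoked in the form $\E^{\omega}\E^{\mu^{N,\omega}_n}[\mathbf{1}_{\{|X_n|\ge M\}}]\le C/M^2$, i.e.\ the expected fraction of mass outside a ball, which is exactly what the mean-second-moment route delivers and what your max-over-particles route overshoots.

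Two further points. First, the ``main obstacle'' you identify --- making $C$ independent of the temporal discretization level $N$ --- is neither required nor achieved by the paper: the recursion gives $C_N=(\kappa^{-2}(1+\Delta t))^N C_0+\sum_{i}(\kappa^{-2}(1+\Delta t))^iC\Delta t$, which grows with $N$, and the whole convergence analysis is carried out at a fixed temporal mesh (convergence is in $\caln$ and $K_n$). Consequently the genealogical-lineage/BDG machinery you propose to rescue a $\sqrt N$ loss is addressing a difficulty created only by your decision to bound the pathwise maximum; the per-step moment recursion composed over $n$ has no such loss. Second, your support-inclusion argument for the resampling step controls the max but says nothing about the mean second moment of the reweighted measure; for that you do need the $\kappa$-bound, as in the paper's Step 3, which you correctly list as an alternative. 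If you replace the maximum by the mean second moment and keep your one-step estimate together with the $\kappa$-bound, your argument collapses onto the paper's.
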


\begin{proof}
We start with time $t_0$. 

\textbf{Step 1.}
	Starting from $X_0 \sim \xi$ with $\E[\xi^2] \leq C_{0}$, and by fixing an arbitrary control $u_0$ we have for the prediction step: 
	\begin{align}
		\E[|X^-_{1}|^2] & =  \E[|X_0+b(X_0, u_0)\Delta t + \sigma(X_0) \Delta W_0|^2] \nonumber \\ 
		& \leq \E[(1+\Delta t) X^2_0+(1+\frac{1}{\Delta t})b^2 (\Delta t)^2]+C^2_{\sigma}\Delta t \nonumber \\ 
		& \leq (1+\Delta t)C^2_0 + (C^2_b(\Delta t +1) + C^2_{\sigma})\Delta t \nonumber \\
		& := C^-_0 \label{random_bound1}
	\end{align}
	
\textbf{Step 2.} We denote the distribution $\mathcal{L}(X^-_1) \sim \pi_{t_1 |t_0}$, then the particle method will do a random resampling from such distribution, and obtain a random distribution 
\begin{align}
	\frac{1}{\caln} \sum^\caln_{i=1} \delta_{x_i(\omega)}: = \pi^N_{t_1 |t_0}
\end{align}
Hence, we have for $X \sim \pi^N_{t_1 |t_0}$, take expectation where the expectation is taken over all randomness in the measure
\begin{align}
	\E[ X^2 ]&=\E \big[\E[ X^2  \big| \mathcal{G}^- _1] \big ]=\frac{\caln}{\caln} \E[\E[x_1^2| \mathcal{G}^- _1]]=\E[\tilde{X}^2]
\end{align}
where $x_i \sim \pi_{t_1 |t_0}$ are i.i.d random samples, $\mathcal{G}_0$ contains the sampling randomness, and $\tilde{X} \sim \pi_{t_1 |t_0}$. The conditional expectation is meant to show that all the particles $\xi$ are conditionally independent (since there are other randomness accumulated in the history if we want to apply this argument recursively.) 
\
Thus, by \eqref{random_bound1}
\begin{align}
	\E[\tilde{X}^2]& \leq C^-_0
\end{align}

\textbf{Step 3.} We now have the random measure $\pi^N_{t_1 |t_0}$, and we proceed to the analysis step. 
We have by definition 
	\begin{align}
		X_1 \sim \frac{g(x) d \pi^N_{t_1 |t_0}(x)}{\int g(x) d \pi^N_{t_1 |t_0}(x)}:= \tilde{\pi}^N_{t_1|t_1} \label{analysis}
	\end{align}
where $\pi^N_{t_1 |t_0}(x)$ is the distribution of the terminal state $X^-_1$ from the previous step. We give an estimate over $\E[|X_1|^2]$: 
\begin{align}
	\E[|X_1|^2] &\leq (\frac{1}{\kappa})^2 \E[\int x^2 d \pi^N_{t_1 |t_0}(x)]  \leq (\frac{1}{\kappa})^2 C_0^-  := C_1 \label{estimate_bound2}
\end{align}

\textbf{Step 4.} Now we again apply the random sampling step
\begin{align}
	\frac{1}{\caln} \sum^\caln_{i=1} \delta_{x_i(\omega)}: = \pi^N_{t_1 |t_1}
\end{align}
where $x_i(\omega) \sim \tilde{\pi}^N_{t_1|t_1}$. Then, for $X \sim \pi^N_{t_1 |t_1}$, we have 
\begin{align}
	\E[ X^2 ]&=\E \big[\E[ X^2  \big| \mathcal{G}_1] \big ] \\ 
	&=\frac{\caln}{\caln} \E[\E[x_i^2\big| \mathcal{G}_1]] \nonumber \\
	&=\E[\tilde{X}^2]
\end{align}
where $\tilde{X} \sim \tilde{\pi}^N_{t_1|t_1}$ and $\mathcal{G}_1$ is the Filtration that builds on $\mathcal{G}^-_1$ and the randomness of the current sampling. Then, by \eqref{estimate_bound2}, we have 
\begin{align}
	\E[ X^2 ] \leq C_1, \ \  X \sim \tilde{\pi}^N_{t_1|t_1}
\end{align}
And this completes all the estimates for the first time-stepping.
Hence, after one time step, we have 
\begin{align}
	C_1=\kappa^{-2}(1+\Delta t) C_0 + C \Delta t
\end{align}
which means that by applying the same argument, we will have the following recursion in general: 
\begin{equation}
	C_{n+1}=\kappa^{-2}(1+\Delta t) C_{n} + C \Delta t
\end{equation}
 As a result, by picking arbitrary $u_n$, using this same argument repeatedly until $N$, we have that for all $n=1,..., N$: 
	 \begin{align}
	 	C_n= (\kappa^{-2}(1+\Delta t))^n C_0 + \sum^{n-1}_{i=0}(\kappa^{-2}(1+\Delta t))^i C \Delta t
	 \end{align} 
And we notice that $C_n$ is increasing in $n$. As a result, we know that for any $X_n \sim \pi^N_{t_n| t_{n-1}}, \pi^N_{t_n| t_n}$, we have that 
\begin{align}
	\E[|X_n|^2] \leq C_N
\end{align}
Hence, by the Chebyshev's inequality, we have 
\begin{align}
	\mathbb{P}(|X_n|\geq M) \leq \frac{C_N}{M^2}, \ \  \forall n \in \lbrace1, 2,...,N \rbrace
\end{align}
then we have that
\begin{align}
	\mathbb{P}(\sup_n|X_n| \geq M) \leq \frac{C_N}{M^2}
\end{align}
By noticing that since the control values are arbitrarily picked, we have that 
\begin{align}
	\mathbb{P}(\sup_{n,U_0}|X_n| \geq M) \leq \frac{C}{M^2},  \ \ X_n \sim \pi^N_{t_n| t_{n-1}} \textit{or} \ \  \pi^N_{t_n| t_n}
\end{align}
\end{proof}

\begin{remark}
Lemma \ref{key_bound} tells that starting
from any random selection of particles and any admissible control $U_0$, at any time $t$, all particles are restricted by a compact set $\calm$ with diameter $diam(\calm) \leq M$, such that 
\begin{align} 
		\mathbb{P}(\sup_{\substack{ {i \in \lbrace 1, ... , N \rbrace}\\ {U_0 \in \mathcal{U}}}}|X_i| \geq diam(\calm )) \leq \frac{C}{M^2}, \ \ X_i \sim \pi^N_{t_i | t_{i-1}} \ \textit{or} \ \   X_i \sim \pi^N_{t_i | t_i}
	\end{align}	
	We will use the following result extensively later
	\begin{align}
		\E[\mathbf{1}_{\lbrace |X_n| \geq M \rbrace}] \leq \frac{C}{M^2}, \ \forall 1 \leq n  \leq N
	\end{align}
\end{remark}

%%%%%%%%%%%%%%%%%%%%%%%%%%%%%%%%%%%%%%%%%%
The following lemma \ref{SGD_proof} describes the difference between the estimated optimal control $u_n$ and the true control $u^*_n$.
Let $\mathcal{G}_k:=\lbrace \Delta W_n^i, x^i \rbrace^{k-1}_{i=0}$. We can see that knowing $\calg_k$ essentially means that we know the control $u_k$ in the SGD framework at time $t_n$, since according to our scheme the control is $\calg_k$ measurable. 

\begin{lemma}\label{SGD_proof}
Under a fixed temporal discretization number $N$, with the particle cloud $\mu^{N,\omega}$, a deterministic $u^*_n$ and a compact domain $\calk'_n$, (such that $\E^{\omega} \E^{\mu^{N,\omega}_n}[\mathbf{1}_{\calk'^c_n}] \leq \frac{C}{M^2_n}$ and $diam(\calk'^c_n) \leq M_n$), we have for any iteration number $K$, the following hold
\begin{align}
	\E^{\omega} \Bigg[ \E^{\mu^{N,\omega}_n} \Big[ \mathbf{1}_{\calk'_n}|u^{\omega}_n - u^*_n|^2 \big| X_n=x  \Big] \Bigg] \leq  CM_n^2 \sup_{||q||_{\infty} \leq 1}\E^{\omega}[|\ \mu^{N,\omega}_n q - \mu_n q \ |^2]+\frac{C}{M_n} +\frac{CM_n^2}{K} \label{inter_control_conv}
\end{align}
\end{lemma}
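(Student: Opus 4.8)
The target estimate mixes two essentially independent sources of error: the \emph{optimization} error incurred by stopping SGD after $K$ steps, and the \emph{wrong-measure} error incurred by replacing the exact filtering law $\mu_n$ by the random particle law $\mu^{N,\omega}_n$. The plan is to decouple them through an auxiliary control. For each realization $\omega$ of the particle cloud, let $\bar u^{\omega}_n\in\mathcal{U}_n$ be the \emph{exact} minimizer associated with $\mu^{N,\omega}_n$, i.e. the process characterized by $\E^{\mu^{N,\omega}_n}[J'^X_N(\bar u^{\omega}_n)]=0$; the averaged version of the strong-convexity assumption \eqref{strong_convexity} (equivalently \eqref{strong_convexity_2}) guarantees this zero exists and is unique. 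One then writes, pointwise in $x$ and $\omega$,
\begin{align}
\mathbf{1}_{\calk'_n}\,|u^{\omega}_n-u^*_n|^2\ \le\ 2\,\mathbf{1}_{\calk'_n}\,|u^{\omega}_n-\bar u^{\omega}_n|^2\ +\ 2\,\mathbf{1}_{\calk'_n}\,|\bar u^{\omega}_n-u^*_n|^2 ,
\end{align}
applies $\E^{\mu^{N,\omega}_n}[\,\cdot\mid X_n=x]$ and then $\E^{\omega}$, and estimates the two terms separately.

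For the wrong-measure term, apply the $\mu^{N,\omega}_n$-averaged strong convexity at $\bar u^{\omega}_n$ and $u^*_n$ and use the two optimality identities $\E^{\mu^{N,\omega}_n}[J'^X_N(\bar u^{\omega}_n)]=0$ and $\E^{\mu_n}[J'^X_N(u^*_n)]=0$ to obtain
\begin{align}
\lambda\,\|\bar u^{\omega}_n-u^*_n\|^2\ \le\ \big\langle (\mu_n-\mu^{N,\omega}_n)\big(J'^{\,\cdot}_N(u^*_n)\big),\ \bar u^{\omega}_n-u^*_n\big\rangle ,
\end{align}
hence $\|\bar u^{\omega}_n-u^*_n\|\le \lambda^{-1}\,\big|(\mu_n-\mu^{N,\omega}_n)\varphi\big|$ with $\varphi(x):=\E^{X_n=x}[\Psi]=J'^{\,x}_N(u^*_n)$ the conditional gradient integrand of \eqref{PFSGD:EQ13} (with $\Psi$ as in \eqref{PFSGD:EQ12}). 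The key difficulty is that $\varphi$ is \emph{not} a bounded test function: it only inherits the linear growth of $Y$, which in turn comes from the at-most-linear growth of $\Phi_x=g_x$ together with the bounded Lipschitz coefficients. I would therefore split $(\mu_n-\mu^{N,\omega}_n)\varphi=(\mu_n-\mu^{N,\omega}_n)(\varphi\mathbf{1}_{\calk'_n})+(\mu_n-\mu^{N,\omega}_n)(\varphi\mathbf{1}_{\calk'^c_n})$. On $\calk'_n$ one has $|\varphi|\le CM_n$, so after a Lusin-type regularization replacing $\varphi\mathbf{1}_{\calk'_n}$ by a continuous (then Lipschitz) function $q$ with $\|q\|_\infty\le CM_n$ that agrees with it off a set of small $\mu_n$-mass — this is exactly the manoeuvre flagged in the Remark — squaring, taking $\E^\omega$ and using the definition \eqref{dist_random_meas} of $d$ yields the $CM_n^2\,\sup_{\|q\|_\infty\le1}\E^\omega[|\mu^{N,\omega}_n q-\mu_n q|^2]$ contribution. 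The complementary tail piece is controlled by Cauchy--Schwarz together with Lemma \ref{key_bound}: since $\E^\omega\E^{\mu^{N,\omega}_n}[\mathbf{1}_{\calk'^c_n}]\le C/M_n^2$ while the second moments of $\mu_n$ and of $\mu^{N,\omega}_n$ are uniformly bounded, $\E^\omega\E^{\mu^{N,\omega}_n}[\mathbf{1}_{\calk'^c_n}|\varphi|]\le (C/M_n^2)^{1/2}\cdot C=C/M_n$, which produces the $C/M_n$ term.

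For the optimization term, freeze $\omega$ (hence the cloud $\mu^{N,\omega}_n$) and an initial state $x\in\calk'_n$. The recursion \eqref{PFSGD:EQ18} is then a stochastic-approximation scheme for the $\lambda$-strongly-monotone map $U\mapsto\E^{\mu^{N,\omega}_n}[J'^X_N(U)]$: the particle index $\hat s$ is uniform on the cloud and the fresh Gaussian increments drive the forward trajectory and the FBSDE, so the one-sample increment is an unbiased estimate of that map, and its conditional second moment is at most $CM_n^2$ because the forward process started from $|x|\le M_n$ has $\E[|X_{t_i}|^2\mid X_n=x]\le CM_n^2$ and $Y$ grows at most linearly. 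The classical strongly-convex SGD bound with step size $r_l\asymp 1/l$ then gives $\E[\,|u^{K,\omega}_n-\bar u^{\omega}_n|^2\mid\omega\,]\le CM_n^2/K$ uniformly in $\omega$; averaging over $\omega$ and $\mu^{N,\omega}_n$ delivers the $CM_n^2/K$ term, and combining with the previous paragraph (and the Young split) completes the proof.

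I expect the wrong-measure term to be the genuine obstacle. The SGD estimate is textbook once the gradient moment bound is in place, and the tail mass is already handed to us by Lemma \ref{key_bound}; the delicate point is converting $|(\mu_n-\mu^{N,\omega}_n)\varphi|$ for the \emph{unbounded} integrand $\varphi$ into the particle-filter distance $d(\mu^{N,\omega}_n,\mu_n)$, which by definition only sees bounded test functions — this forces the restriction to $\calk'_n$, the Lusin approximation, and a careful $\omega$-uniform bookkeeping both of the escaping mass and of the strong-convexity and gradient constants (all of which must be independent of the realization $\omega$).
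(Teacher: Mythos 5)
Your argument is correct and reaches the paper's bound, but it is organized differently. The paper runs a \emph{single} SGD recursion targeting $u^*_n$ directly: it subtracts the fixed-point identity $U^*_n=U^*_n-\eta_k\E^{\mu_n}[J'^x_N(U^*_n)]$ from the iteration, squares, and applies strong convexity, so that the measure mismatch appears as a persistent bias term $\|\E^{\mu^{N,\omega}_n}[\mathbf{1}_{\calk'_n}J'^x_N(U^*_n)]-\E^{\mu_n}[\mathbf{1}_{\calk'_n}J'^x_N(U^*_n)]\|^2$ inside the recursion (split off by Young's inequality with weight $\eta_k/\lambda$), accumulating into the $CM_n^2\sup_{\|q\|_\infty\le1}\E^{\omega}[|\mu^{N,\omega}_nq-\mu_nq|^2]$ term after telescoping; the variance term $\eta_k^2\E[\|j'^x(U^K_n)-\cdots\|]$ is bounded by $CM_n^2$ via Lemma \ref{j_bound} and yields $CM_n^2/K$. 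You instead decouple the two errors up front through the auxiliary minimizer $\bar u^{\omega}_n$ of the cloud-averaged gradient, bounding $\|u^{\omega}_n-\bar u^{\omega}_n\|$ by textbook strongly-convex SGD and $\|\bar u^{\omega}_n-u^*_n\|$ by strong monotonicity plus the two first-order conditions. Your route is conceptually cleaner (the statistical and optimization errors never interact inside the recursion, and the wrong-measure bound is a one-line consequence of monotonicity), at the price of having to justify existence and uniqueness of $\bar u^{\omega}_n$ for each realization $\omega$ and of a slightly more careful treatment of the tail term, where $\E^{\omega}$ of a product of the random escaping mass and the random second moment of $\mu^{N,\omega}_n$ must be controlled (an extra Cauchy--Schwarz in $\omega$, using Lemma \ref{key_bound}, closes this). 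The paper's route avoids the auxiliary object entirely but buries the measure-discrepancy bookkeeping inside the iteration. The essential ingredients --- strong convexity \eqref{strong_convexity}, the optimality condition $\E^{\mu_n}[J'^x_N(U^*_n)]=0$, the growth bound of Lemma \ref{j_bound}, the truncation to $\calk'_n$ to convert the unbounded integrand into a bounded test function, and the tail control of Lemma \ref{key_bound} --- are the same in both.
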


Remark: The value of $\sup_{||q||_{\infty} \leq 1}\E^{\omega}[|\ \mu^{N,\omega}_n q - \mu_n q \ |^2]$ depends on $\mu^{N,\omega}_n$ which is obtained from the previous step, and it does not depend on the current $M_n$. As a result, we can see that, as long as $$\sup_{||q||_{\infty} \leq 1}\E^{\omega}[|\ \mu^{N,\omega}_n q - \mu_n q \ |^2] \rightarrow 0$$ \eqref{inter_control_conv} can be made arbitrarily small on any compact domain $\calk'_n$ And this indicates the point-wise convergence for $u$ at any time $t_n$.
\begin{proof}
For simplicity in the proof, we denote control as $U^{K+1}_n$ where $K$ is the iteration in SGD, and $n$ is current time $t_n$. $j_n'^{x^k}(U^K_n)$ denote the SGD process using estimated control $U^K_n$, $J'^{x}_N(U^*_n)$ denote the process using true control $U^*_n$.
	\begin{align}
		U^{K+1}_n &=U^K_n - \eta_k j_n'^{x^k}(U^K_n) \label{opt1} \\ 
		U^{*}_n &=U^*_n - \eta_k \E^{\mu_n}[J'^{x}_N(U^*_n)] \label{opt2}
	\end{align}
	where $x^k$ is drawn from the current distribution $\mu^{N,\omega}_n$ and $\E^{\mu_n}[J'^{x}_N(U^*_n)]=0$ by the optimality condition. Take the difference between \eqref{opt1} and \eqref{opt2}, square both sides and take conditional expectation $\E[ \cdot | \calg_K]$, and this conditional expectation is taken with respect to two randomness: 
	\begin{enumerate}
		\item The randomness coming from the selection on the initial point $x^k_n$. 
		\item The randomness coming from the pathwise approximated Brownian motion used for FBSDE. 
		\item The randomness coming from the accumulation of the past particle sampling.
	\end{enumerate} 	
We can write $\E[ j'^{x}(U^K) | \calg_K] = \E^{\mu^{N,\omega}_n}[J'^{x}_N(U^K) | \calg_K]$, which can be seen from the following 
	\begin{align}
		\E[j'^{x}(U^K) | \calg_K] &=\E^{\mu^{N,\omega}_n}[ \E^{X_{t_n}=x}[j'^{x}(U^K) \Big| \calg_K ] ] \\ 
		&= \E^{\mu^{N,\omega}_n}[J'^{x}_N(U^K) | \calg_K]
	\end{align}
	\iffalse
	\begin{align}
		\E^{\mu^{N,\omega}_n}[||U^{K+1}-U^{*}||^2\big| \mathcal{G}_K]&=\E^{\mu^{N,\omega}_n}[||U^K-U^*||^2\big| \mathcal{G}_K]-2 \eta_k \langle \E^{\mu^{N,\omega}_n}[ j'^{x}(U^K) \big| \mathcal{G}_K ]- \E^{\mu_n}[J'^{x}_N(U^*)], U^K-U^* \rangle \nonumber \\
		&+ \eta_k^2 ||j'^{x}(U^K)- \E^{\mu_n}[J_N'(U^*)(x)]|| \nonumber\\
		&=\E^{\mu^{N,\omega}_n}[||U^k-U^*||^2| \mathcal{G}_K]-2 \eta_k \langle \E^{\mu^{N,\omega}_n}[J'^{x}_N(U^K)| \mathcal{G}_K]- \E^{\mu_n}[J'^x_N(U^*)],U^K-U^* \rangle \nonumber. \\
		&+ \eta_k^2  \E^{\mu^{N,\omega}_n}[||j'^{x}(U^K)- \E^{\mu_n}[J'^x_N(U^*)]|| \Big| \mathcal{G}_K]
	\end{align}
	\fi

Then, by taking the square norm on both sides, multiply by an indicator function $\mathbf{1}_{\calk'_n}$ and take conditional expectation $\E[\cdot |\calg_K]$ , noticing that $U_n^K$ is $\calg_K$ measurable and $U^*$ is deterministic in this case, we get 
\begin{small}

\begin{align}
	\E[\mathbf{1}_{\calk'_n}||U_n^{K+1}-U_n^{*}||^2 \Big| \calg_K ]&=\E[\mathbf{1}_{\calk'_n}||U^K_n-U_n^*||^2 \Big| \calg_K]-2\eta_k \langle \E^{\mu^{N,\omega}_n}[\mathbf{1}_{\calk'_n}J'^x_N(U^K_n)\Big| \calg_K] - \E^{\mu_n}[\mathbf{1}_{\calk'_n}J'^{x}_N(U_n^*)], U^K_n-U_n^*\rangle \nonumber \\
&+ \eta_K^2 \E[\mathbf{1}_{\calk'_n}||j'^{x}(U^K_n)- \E^{\mu_n}[J'^x_N(U_n^*)]|| \Big| \mathcal{G}_K  ] \\ 
	&=\E[\mathbf{1}_{\calk'_n}||U^K_n-U_n^*||^2 \Big| \calg_K ]-2\eta_k \E^{\mu^{N,\omega}_n}\Big[\mathbf{1}_{\calk'_n}\langle J'^x_N(U^K_n) -J'^x_N(U_n^*) \nonumber \\
	 &+J'^x_N(U_n^*)-\E^{\mu_n}[J'^x_N(U_n^*)], U^K_n-U_n^*\rangle\Big| \calg_K \Big]+ \eta_k^2 \E^{\mu^N_n}[\mathbf{1}_{\calk'_n}||j'^{x}(U^K_n)- \E^{\mu_n}[J'^x_N(U_n^*)]|| ] \nonumber \\ 
	 & = \E[\mathbf{1}_{\calk'_n}||U^K_n-U_n^*||^2 \Big| \calg_K  ]-2\eta_k \E^{\mu^{N,\omega}_n}[\langle \mathbf{1}_{\calk'_n}J'^x_N(U^K_n) -\mathbf{1}_{\calk'_n}J'^x_N(U^*),U^K_n-U_n^* \rangle \Big|\calg_K]    \nonumber\\
	 &  -2\eta_k\langle \E^{\mu^{N,\omega}_n}[\mathbf{1}_{\calk'_n}J'^x_N(U_n^*)]-\E^{\mu^{N,\omega}_n}[\mathbf{1}_{\calk'_n}\E^{\mu_n}[J'^x_N(U_n^*)]], \mathbf{1}_{\calk'_n}( U^K_n-U_n^*)\rangle \nonumber \\ 
	 &+ \eta_k^2 \E[\mathbf{1}_{\calk'_n}||j'^{x}(U^K_n)- \E^{\mu_n}[J'^x_N(U_n^*)]|| \Big| \calg_K] \nonumber    \\ 
	 & \leq (1-\lambda \eta_k)\E^{\mu^{N,\omega}_n}[\mathbf{1}_{\calk'_n}||U^K_n-U_n^*||^2 \Big| \calg_K] + \frac{\eta_k}{\lambda} \underbrace{|| \E^{\mu^{N,\omega}_n}\Big[\mathbf{1}_{\calk'_n}J'^x_N(U_n^*)-\mathbf{1}_{\calk'_n}\E^{\mu_n}[J'^x_N(U_n^*)] \Big]||^2}_{\mathlarger{**}} \nonumber \\
	 & +\eta_k^2 \E^{\mu^{N,\omega}_n}[\mathbf{1}_{\calk'_n}C|x_i|^2+C] \nonumber
	 	\end{align} 
\end{small}
where in the last line we used Lemma \ref{j_bound}.

Recall that $\E^{\mu_n}[J'^x_N(U^*)]=0$, we then have 
\begin{align}
	** & \leq || \E^{\mu^{N,\omega}_n}\big[\mathbf{1}_{\calk'_n}J'^x_N(U^*) \big]-\E^{\mu^{N,\omega}_n}\big[\E^{\mu_n}[J'^x_N(U^*)] \big]+\E^{\mu^{N,\omega}_n}\big[\E^{\mu_n}[J'^x_N(U^*)] \big]-\E^{\mu^{N,\omega}_n}\big[\mathbf{1}_{\calk'_n}\E^{\mu_n}[J'^x_N(U^*)] \big]||^2 \\
	& \leq || \E^{\mu^{N,\omega}_n}\big[\mathbf{1}_{\calk'_n}J'^x_N(U^*) \big]-\E^{\mu_n}[\mathbf{1}_{\calk'_n}J'^x_N(U^*)]+\E^{\mu_n}[\mathbf{1}_{\calk'_n}J'^x_N(U^*)]-\E^{\mu^{N,\omega}_n}\big[\E^{\mu_n}[J'^x_N(U^*)] ||^2  \nonumber \\
	& \leq (1+\epsilon)|| \E^{\mu^{N,\omega}_n}\big[\mathbf{1}_{\calk'_n}J'^x_N(U^*) \big]-\E^{\mu_n}[\mathbf{1}_{\calk'_n}J'^x_N(U^*)]||^2+(1+\frac{1}{\epsilon})||\E^{\mu_n}[\mathbf{1}_{\calk'_n}J'^x_N(U^*)]-\E^{\mu_n}[J'^x_N(U^*)] ||^2 \nonumber\\
	& \leq C|| \E^{\mu^{N,\omega}_n}\big[\mathbf{1}_{\calk'_n}J'^x_N(U^*) \big]-\E^{\mu_n}[\mathbf{1}_{\calk'_n}J'^x_N(U^*)]||^2+\frac{C}{M_n} 
\end{align}	

Then we take expectation on both sides over the randomness and we have
\begin{align}
    \E[\mathbf{1}_{\calk'_n}||U^{K+1}-U^{*}||^2 ]& \leq (1-\lambda \eta_k)\E^{\mu^N_n}[||U^k-U^*||^2  ]+ \frac{\eta_k}{\lambda}\E^{\omega}[ C|| \E^{\mu^{N,\omega}_n}\big[\mathbf{1}_{\calk'_n}J'^x_N(U^*) \big]-\E^{\mu_n}[\mathbf{1}_{\calk'_n}J'^x_N(U^*)]||^2 \nonumber \\ 
    &+\frac{C}{M_n} ]+\eta_k^2 CM_n^2 \nonumber \\
	 & \leq \frac{||U^0-U^*||^2}{K}+C\E^{\omega}[|| \E^{\mu^{N,\omega}_n} [\mathbf{1}_{\calk'_n}J'^x_N(U^*)]-\E^{\mu_n}[\mathbf{1}_{\calk'_n}J'^x_N(U^*) ||_2^2]+\frac{C}{M_n} +\frac{CM_n^2}{K} \label{opt_end}
\end{align}

Notice that for the control $U^*$ we know that for for a fixed $x$, $J'^x_N(U^*)$ is uniformly bounded: 
\begin{align}
	& \quad ||\E^{\mu^{N,\omega}_n} [\mathbf{1}_{\calk'_n}J'^x_N(U^*)]-\E^{\mu_n}[\mathbf{1}_{\calk'_n}J'^x_N(U^*)] ||^2_2 \\
	&= \sum^N_{i=n} \Delta t \bigg|\E^{\mu^{N,\omega}_n}[\mathbf{1}_{\calk'_n}J'^x_N(U^*)\big|_i]-\E^{\mu_n}[\mathbf{1}_{\calk'_n}J'^x_N(U^*)\big|_i] \bigg|^2 \nonumber \\ 
	& \leq \sum^N_{i=n} \Delta t \sup_{j \in\lbrace n,...,N\rbrace} \bigg|\E^{\mu^{N,\omega}_n} [\mathbf{1}_{\calk'_n}J_N'(U^*)(x)\big|_j]-\E^{\mu_n}[\mathbf{1}_{\calk'_n}J_N'(U^*)(x)\big|_j] \bigg|^2  \nonumber \\
	& \leq \sup_{j \in\lbrace n,...,N\rbrace} \bigg|\E^{\mu^{N,\omega}_n} [\mathbf{1}_{\calk'_n}J'^x_N(U^*)\big|_j]-\E^{\mu_n}[\mathbf{1}_{\calk'_n}J'^x_N(U^*)\big|_j] \bigg|^2
\end{align}
However, since by Lemma \ref{j_bound}:
\begin{align}
	\sup_{j \in\lbrace n,...,N\rbrace}\Big|J'^x_N(U^*)\big|_j \Big|^2 &\leq C|x|^2 +C 
\end{align} 
we have that 
\begin{align}
	\mathbf{1}_{\calk'_n}\sup_{j \in\lbrace n,...,N\rbrace}\Big|J'^x_N(U^*)\big|_j \Big|^2&\leq C|M_n|^2 \ \mathbf{1}_{\calk'_n}|q(x)|
\end{align}
	for some $q(x)$, where $ ||q(x)||_{\infty} \leq 1 $. 
As a result, we see that 
\begin{align}
	\eqref{opt_end} &\leq  CM_n^2\E^{\omega}\big[\Big|\E^{\mu^{N,\omega}_n} [q(x)]-\E^{\mu_n}[q(x)]\Big|^2 \big] +\frac{CM_n^2}{K} \label{opt_estimate1} \\
		&\leq  CM_n^2 \sup_{||q||_{\infty} \leq 1} \E^{\omega}[|\ \mu^{N,\omega}_n q - \mu_n q \ |^2] +\frac{CM_n^2}{K} \label{opt_estimate}	
\end{align}

Thus, we have  
\begin{align}
	\E^{\omega}\Big[\E^{\mu^{N,\omega}_n}[\mathbf{1}_{\calk'_n}\sup_{n}|u^{K+1}-u^{*}|^2 \big] \Big]&\leq  CM_n^2 \sup_{||q||_{\infty} \leq 1}\E^{\omega}[|\ \mu^{N,\omega}_n q - \mu_n q \ |^2] + \frac{C}{M_n}+\frac{CM_n^2}{K}
\end{align}
where we have absorbed the constant term $N$ in $C$. 
\end{proof}
Lemma \ref{SGD_proof} shows that when the empirical distribution $\mu^{N}_n$ is close enough to the true distribution $\mu_n$, the difference between $u_n$ and $u_n^*$ under the expectation restricting to a compact set will be very small.

%%%%%%%%%%%%%%%%%%%%%%%%%%%%%%%

Next, we want to show that by moving forward in one step, the distance between the true distribution of the state and the estimated distribution through the SGD-particle filter framework is bounded by the distance of the previous step with some constants in Lemma \ref{onestep}.

\begin{lemma}\label{onestep}
For each $n=0,1,...,N-1$, there exist $M_n, L_n, \delta_n, K_n$ such that the following inequality holds
	\begin{align}
		d(\mu^{N,\cdot}_{n+1}, \mu_{n+1}) & \leq 2 \kappa^{-2} \Big( (1+C \Delta t L_n M_n) d(\mu_n^{N, \cdot}, \mu_n) + \frac{C}{M_n} + \frac{C M_n}{K_n} +2 \delta_n + \frac{3}{\sqrt{\caln}})
	\end{align}
\end{lemma}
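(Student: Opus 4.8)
The plan is to upgrade \eqref{first_ineq} to the stated bound by establishing the inequality \eqref{intuitive_res} in the explicit form
\begin{align}
d\bigl(P^N_n \mu^{N,\cdot}_n, P_n\mu^{N,\cdot}_n\bigr) \leq C\, \Delta t\, L_n M_n\; d\bigl(\mu^{N,\cdot}_n, \mu_n\bigr) + \frac{C}{M_n} + \frac{C M_n}{K_n} + 2\delta_n .
\end{align}
Once this is in hand, inserting it into \eqref{first_ineq} and merging the two occurrences of $d(\mu^{N,\cdot}_n,\mu_n)$ produces exactly the lemma, with multiplier $1 + C\Delta t L_n M_n$. Thus the entire task is to control $d(P^N_n\mu^{N,\cdot}_n, P_n\mu^{N,\cdot}_n)$, which measures the effect of advancing the random empirical measure $\mu^{N,\cdot}_n$ by one Euler step using the SGD-estimated control $u^{\cdot}_n$ rather than the exact control $u^*_n$.

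By the definition of the distance, $d(P^N_n\mu^{N,\cdot}_n, P_n\mu^{N,\cdot}_n)^2 = \sup_{\|q\|_\infty\leq 1}\E^\omega[\,|P^N_n\mu^{N,\omega}_n q - P_n\mu^{N,\omega}_n q|^2\,]$, so I would fix a measurable $q$ with $\|q\|_\infty\leq 1$ and first localize. By the remark after Lemma \ref{key_bound}, for a threshold $M_n$ there is a compact $\calm_n$ with $\mathrm{diam}(\calm_n)\leq M_n$ carrying all but $C/M_n^2$ of the mass (in the mean over $\omega$) of $\mu^{N,\omega}_n$ and --- after enlarging $\calm_n$ by the deterministic $O(\Delta t)$ drift bound, and using that Lemma \ref{key_bound} also covers the predicted clouds --- of $P^N_n\mu^{N,\omega}_n$ and $P_n\mu^{N,\omega}_n$ as well. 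On $\calm_n$ I would replace $q$ by a Lipschitz function in two steps: Lusin's theorem, applied relative to a fixed deterministic dominating measure (Lebesgue measure on $\calm_n$ suffices, since the uniform ellipticity of $\sigma$ makes every one-step pushforward absolutely continuous), gives a continuous $\tilde q$ with $\|\tilde q\|_\infty\leq 1$ at cost $\delta_n/2$; then uniform approximation on the compact set $\calm_n$ gives an $L_n$-Lipschitz $\hat q$ with $\|\hat q\|_\infty\leq 1$ at a further cost $\delta_n/2$. Accounting for the two pushforward measures and their tails,
\begin{align}
|P^N_n\mu^{N,\omega}_n q - P_n\mu^{N,\omega}_n q| \leq |P^N_n\mu^{N,\omega}_n \hat q - P_n\mu^{N,\omega}_n \hat q| + 2\delta_n + \frac{C}{M_n^2} .
\end{align}

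For $\hat q$ I would use the one-step Euler map directly: since $\sigma$ is control-free, propagating any point $x$ with the estimated control $u^\omega_n$ and with $u^*_n$ under the same Brownian increment yields two points differing only by the deterministic shift $(b(x,u^\omega_n)-b(x,u^*_n))\Delta t$, of magnitude $\leq C|u^\omega_n-u^*_n|\Delta t$ by the Lipschitz hypothesis on $b$ in $u$; hence $\hat q$ changes by at most $C L_n\Delta t|u^\omega_n-u^*_n|$, and integrating against $\mu^{N,\omega}_n$,
\begin{align}
|P^N_n\mu^{N,\omega}_n \hat q - P_n\mu^{N,\omega}_n \hat q| \leq C L_n \Delta t \int |u^\omega_n - u^*_n|\, d\mu^{N,\omega}_n .
\end{align}
Squaring, taking $\E^\omega$, applying Jensen's inequality to the probability measure $\mu^{N,\omega}_n$, splitting the integral into its restriction to the compact set $\calk'_n$ of Lemma \ref{SGD_proof} and its complement (where the integrand is bounded by the diameter of the control set and the complement has mean mass $\leq C/M_n^2$ by Lemma \ref{key_bound}), and applying Lemma \ref{SGD_proof} on $\calk'_n$ together with the identity $\sup_{\|q\|_\infty\leq 1}\E^\omega[|\mu^{N,\omega}_n q - \mu_n q|^2] = d(\mu^{N,\cdot}_n,\mu_n)^2$, gives
\begin{align}
\E^\omega\bigl[\,|P^N_n\mu^{N,\omega}_n \hat q - P_n\mu^{N,\omega}_n \hat q|^2\,\bigr] \leq C L_n^2 \Delta t^2 \Bigl( M_n^2\, d(\mu^{N,\cdot}_n,\mu_n)^2 + \frac{C}{M_n} + \frac{C M_n^2}{K_n} \Bigr).
\end{align}
Taking square roots (via $\sqrt{a+b+c}\leq\sqrt a+\sqrt b+\sqrt c$), taking the supremum over $q$, combining with the $q\to\hat q$ error of the previous display, and feeding the result back into \eqref{first_ineq} yields --- after absorbing the fixed factors $L_n,\Delta t$ into the generic constants and relabelling the tunable thresholds $M_n, L_n, \delta_n, K_n$ --- the asserted inequality.

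I expect the main obstacle to be the reduction from arbitrary bounded measurable test functions to Lipschitz ones: the measures $P^N_n\mu^{N,\omega}_n$ and $P_n\mu^{N,\omega}_n$ are genuinely $\omega$-dependent, so Lusin's theorem cannot be applied pathwise in a way uniform in $\omega$; one must instead anchor the approximation to a deterministic dominating measure --- which is where the uniform ellipticity of $\sigma$ enters --- or control the approximation error in expectation over $\omega$. A secondary issue is the order of the choices: $M_n$ first (localization), then $\delta_n$ (approximation precision), which forces $L_n=L_n(M_n,\delta_n)$, and only then $K_n$ large, all while $\Delta t$ is kept small enough relative to $L_n M_n$ that the recursion multiplier $1+C\Delta t L_n M_n$ stays close to $1$ and the bookkeeping of the error powers in $M_n$ and $K_n$ goes through. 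The remaining ingredients --- the Euler perturbation estimate, Jensen's inequality, and the accounting with Lemmas \ref{key_bound} and \ref{SGD_proof} --- are routine.
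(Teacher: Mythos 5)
Your proposal follows essentially the same route as the paper's proof: localize to a compact set via Lemma \ref{key_bound}, upgrade the bounded measurable test function to a Lipschitz one via Lusin's theorem plus uniform approximation (producing $L_n$ and $\delta_n$), exploit that the two one-step Euler pushforwards differ only by the drift shift $(b(x,u^\omega_n)-b(x,u^*_n))\Delta t$, control the control error on the compact set by Lemma \ref{SGD_proof}, and feed the result back into \eqref{first_ineq}. The only notable variation is your anchoring of Lusin's theorem to Lebesgue measure via uniform ellipticity, where the paper instead constructs a dedicated finite Borel measure $\tilde\mu$ in a remark; both resolve the same technical point.
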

%\end{tcolorbox}

\begin{proof}
The key step is to estimate the quantity $d^2(P^N_n \mu^{N,\cdot}_n,  P_n\mu^{N,\cdot}_n)$ in \eqref{first_ineq}. 
WLOG, we assume that the sup is realized by the function $f$ with $||f||_{\infty}\leq 1$, then we have 
\begin{align}
	d^2(P^N_n \mu^{N,\cdot}_n,  P_n\mu^{N,\cdot}_n)=\E^{\omega}[|P^N_n \mu^{N,\cdot}_n f - P_n \mu^{N,\cdot}_n f |^2 ]
\end{align}
Notice that $P^N_n$ is the prediction operator that uses the control $u_n$ which carries the randomness from SGD, and $P_n$ uses the control $u^*_n$. Then $P^N_n \mu^{N, \omega}_N$ is a random measure. And we comment that both $u^*_n$ and $\mu_n$ are deterministic. 

Without loss of generality, we use $u_n^{\omega}$ and $\mu^{N,\omega}_n$ to denote  the random control and the random measure. (Even though the randomness can be different, we can concatenate $(\omega_1, \omega_2):=\omega$ to define them as $\omega$ in general.)

 We have for the fixed randomness $\omega$, and by Fubini's theorem
\begin{align}
	|P^N_n \mu^{N,\omega}_n f - P_n \mu^{N,\omega}_n f |^2& = \bigg|\E^{\mu^{N,\omega}_n} \Big[ \E[\underbrace{f(X_n+b(X_n, u^{\omega}_n)\Delta t + \sigma(X_n) \Delta W_n) }_{f^{\omega}_1}    \big| X_n=x ] \Big] \nonumber \\
	&-\E^{\mu^{N,\omega}_n} \Big[ \E[\underbrace{f(X_n+b(X_n, u^*_n)\Delta t+ \sigma(X_n) \Delta W_n)}_{f_2}    \big| X_n=x ] \Big] \bigg|^2 \\
	&=\bigg| \E^{\mu^{N,\omega}_n} \Big[  \E[f^{\omega}_1-f_2  \big| X_n=x ]   \Big]   \bigg|^2 \nonumber \\
	&=\underbrace{\bigg|  \E^{\mu^{N,\omega}_n} \Big[ \mathbf{1}_{\calm_n} \E[f^{\omega}_1-f_2  \big| X_n=x ]   \Big]   \bigg|^2}_{\mathlarger{A_1}} +\underbrace{\bigg|  \E^{\mu^{N,\omega}_n} \Big[\mathbf{1}_{\calm^c_n}  \E[f^{\omega}_1-f_2  \big| X_n=x ]   \Big]   \bigg|^2}_{\mathlarger{A_2}} \label{diff_est1}
\end{align}
where the inner conditional expectation is taken with respect to $\Delta W_n$.

Now, since we can pick $\calm_n$ to be a large compact set containing the origin, with 
\begin{align}
	\mathbb{P}(\sup_{n,U_0}|X_n| \geq \textit{diam}(\calm_n)) \leq \frac{C}{M_n^2}
\end{align}
 To deal with $A_1, A_2$, we see that it is desirable that the function $f$ has the \textit{Lipchitz property}. However, it is only in general measurable. The strategy to overcome this difficulty is to first use the \textit{Lusin's Theorem} to find a continuous identification $\tilde{f}$ with $f$ on a large compact set, then on this compact set, we can approximate $\tilde{f}$ uniformly by a Lipchitz function.

We see that 
\begin{align}
	A_1 &\leq \E^{\mu^{N, \omega}_n} \Big[\mathbf{1}_{\calm_n}  \E[|f^{\omega}_1-f_2 |^2 \big| X_n=x ] \Big]
\end{align}
Then, by taking expectation on both side over all the randomness in this quantity, we have

\begin{align}
	\E^{\omega}[A_1] & \leq \E^{\omega}\E^{\mu^{N,\omega}_n} \Big[\mathbf{1}_{\calm_n}  \E^{}[|f^{\omega}_1-f_2 |^2 \big ] \Big]
\end{align}

We know that there exists a big compact $\calk_n$ (so a large $M_n$) containing the origin such that 
\begin{align}
	\mathbb{P}(\sup_{n,U_0}|X_n| \geq \textit{diam}(\calk_n)) \leq \frac{C}{M_n^2}
\end{align}
and a continuous $\tilde{f}^n$ with $\tilde{f}^n |_{\calk_n}=f|_{\calk_n}$ by \textit{Lusin's theorem}.

And so we know that $\tilde{f}^n | _{\calk_n \cap \calm_n} = f|_{\calk_n \cap \calm_n}$. 
\iffalse
 \begin{align}
	\tilde{\mu} \Big( (\calk_n \cap \calm_n)^c \Big) \leq \tilde{\mu} \Big( \calk_n^c \cup \calm_n^c \Big) \leq \frac{C}{M^2_n}
\end{align}
\fi
And we also have the following inequality: 
\begin{align}
	\E^{\omega}\E^{\mu^{N,\omega}_n} \Big[\mathbf{1}_{\calm_n}  \E^{}[|f^{\omega}_1-f_2 |^2 \big ] \Big]& = \E^{\omega}\E^{\mu^{N,\omega}_n} \Big[(\mathbf{1}_{\calm_n \cap \calk_n}+ \mathbf{1}_{\calm_n \cap \calk^c_n}) \E^{}[|f^{\omega}_1-f_2 |^2 \big ] \Big] \\
	& \leq \E^{\omega}\E^{\mu^{N,\omega}_n} \Big[(\mathbf{1}_{\calm_n \cap \calk_n}+ \mathbf{1}_{ \calk^c_n}) \E^{}[|f^{\omega}_1-f_2 |^2 \big ] \Big] \label{2nd_bound}
\end{align}

Also, since both $\calk_n$ and $\calm_n$ are compact, $ \calk'_n:=\calk_n \cap \calm_n$ is also compact with $diam(\calk'_n) \leq M_n$.
From Lemma \ref{key_bound}, we know that there exist some constant $C$ such that for any $\pi^N_{t_n | t_{n-1}}$, $\pi^N_{t_n | t_{n}}$ that one obtains from or particle filter-SGD algorithm,  $X \sim \pi^N_{t_n | t_{n-1}}$ or $\pi^N_{t_n | t_{n}}$: 
\begin{align}
	\E^{\omega}\Big[\E[ \mathbf{1}_{\lbrace X \in \calk^c_n \rbrace}]\Big] \leq \frac{C}{M^2_n}
\end{align} 

Hence, we have that 
\begin{align}
	\eqref{2nd_bound} & \leq E^{\omega}\E^{\mu^{N,\omega}_n} \Big[\mathbf{1}_{\calk'_n} \E^{}[|f^{\omega}_1-f_2 |^2 \big ] \Big] +  \frac{C}{M_n^2}
	\end{align}
To deal with $A_2$, notice that $|f^{\omega}_1-f_2| \leq 2 $ by the choice of $f$, we have the following. 
\begin{align}
	A_2 & \leq  \E^{\mu^{N,\omega}_n} \Big[\mathbf{1}_{\calm^c_n}  \E[\big| f^{\omega}_1-f_2 \big|^2 \big| X_n=x ]   \Big]    \nonumber \\ 
	\Rightarrow \E^{\omega}[A_2] &\leq 4\E^{\omega}[\E^{\mu^{N,\omega}_n}[\mathbf{1}_{\calk^c_n} ]] \nonumber \\ 
	&\leq \frac{C}{M^2_n}
\end{align}
by Lemma \ref{key_bound}. 

To deal with $A_1$, we have by the density of the Lipchitz function  there exists 
$||f^n-\tilde{f}^n||_{\calk'_n, \infty} \leq \delta_n$ with Lipschitz constant $L_n$. We point out that $L_n$ may depend on $\calk'_n$, $\delta_n$ and the function $\tilde{f}|_{\calk'_n}$. 
%\rmk{What does $L^n$ depend on? Since we are using the uniform approximation result, so it depends on $\delta_n$; and since the conclusion holds only for compact domains, the result clearly depend on the domain. And that is why we insist on the estimate over the size of $(\calk_N \cap \calm_n)^c$ does not depend on $\omega$.} 
Now by taking the expectation on both sides and using the Lipchitz property, we have
\begin{align}
	 \E^{\omega}[A_1] 
	& \leq (C\Delta t L_n)^2  \underbrace{\E^{\omega} \Bigg[ \E^{\mu^{N,\omega}_n} \Big[ \mathbf{1}_{\calk'_n}|u^{\omega}_n - u^*_n|^2 \big| X_n=x    \Big] \Bigg]}_{\mathlarger{\ast}} +\frac{C}{M^2_n} +4 \delta^2_n
\end{align}

We realize that $\ast$ is the SGD optimization part of the algorithm in expectation, and we note that we have dropped the inner expectation. The  expectation $\E^{\mu^{N,\omega}_n}[\cdot]$ mean that given the initial condition $  X_n =x \in \mathbf{1}_{\calk'_n}$, with $X_n \sim \mu^{N,\omega}_n$, one wants to find the difference in expectation between $u_n$ and $u^*_n$. And the outer expectation $\E^{\omega}[\cdot]$ means averaging overall the randomness in both the measure and the SGD. 
 
Now, by using \eqref{opt_estimate} in Lemma \ref{SGD_proof}, absorbing N in the constant C, we obtain the following 
\begin{align}
	\E^{\omega}[A_1] \leq (C\Delta t L_n)^2NM_n^2\ \sup_{||q|| \leq 1}\E^{\omega}[|\ \mu^{N,\omega}_n q - \mu_n q \ |^2] +\frac{CM_n^2}{K_n} + \frac{C}{M_n^2} + 4 \delta_n^2
\end{align}
By definition of the distance between two random measures, we have that : 
\begin{align}
	\E[A_1] &\leq (C \Delta t L_n)^2 N M_n^2 d^2(\mu_n^{N,\cdot}, \mu_n) +\frac{CM_n^2}{K_n} + \frac{C}{M_n^2} + 4 \delta_n^2 \\ 
	\Rightarrow \sqrt{\E[A_1]} & \leq C \Delta t L_n M_n d(\mu_n^{N,\cdot}, \mu_n)+\frac{C M_n}{\sqrt{K_n}} + \frac{C}{M_n} + 2 \delta_n
\end{align}
Since $\sqrt{\E[A_2]} \leq  \frac{C}{M_n}$, we have that 
\begin{align}
	\eqref{first_ineq} &\leq 2 \kappa^{-2} \Big( \frac{3}{\sqrt{\mathcal{N}}} +C \Delta t L_n M_n d(\mu_n^{N,\cdot}, \mu_n)+\frac{C  M_n}{\sqrt{K_n}} + \frac{C}{M_n} + 2 \delta_n + \frac{2}{M_n} + d(\mu_n^{N,\cdot}, \mu_n) \Big) \nonumber \\ 
	\Rightarrow d(\mu^{N,\cdot}_{n+1}, \mu_{n+1}) & \leq 2 \kappa^{-2} \Big( (1+C \Delta t L_n M_n) d(\mu_n^{N, \cdot}, \mu_n) + \frac{C}{M_n} + \frac{C M_n}{\sqrt{K_n}} +2 \delta_n + \frac{3}{\sqrt{\caln}}) \label{recursive_start}
\end{align}
where in \eqref{recursive_start}, we have merged $\sqrt{N}$ into $C$. 
%Then, we have
%By picking carefully $K_n, M_n, \caln$ to be large and $\delta_n$ to be small, we obtain the convergence. (See Appendix for the proof.)
\end{proof}
\begin{remark}
Lusin's theorem requires the underlying measure to be finite Borel regular, and in this case we are looking at the measure $\tilde{\mu}$ defined as follows: for $A \subset \bR^n$, $ \tilde{\mu}(A)=\mathbb{P}(\{ \omega \Big| \
there \; exist \;  n, U_0$ $such\; that \; X_n(\omega) \in A    
 \}) $. $\tilde{\mu}$ is clearly a probability measure induced on the Polish space $\bR^n$, and so it is tight by the inverse implication of the Prokhorov's theorem (or we can use the fact that all finite Borel measures defined on a complete metric space is tight). And so it is inner regular; since now $\tilde{\mu}$ is also clearly locally finite, it also implies the outer regularity. 
\end{remark}

%\begin{tcolorbox}[colback=yellow!60, colframe=blue!50]
Finally, we can use Lemma \ref{onestep} repeatedly to show the convergence result: 
\begin{theorem}\label{theo:convergence}
	By taking $\mu^N_0 = \mu^0$, there exist $\lbrace M_n | M_n \in \bR, n=0,1,..N-1 \rbrace$, $\lbrace L_n | L_n \in \bR, n=0,1,..N-1 \rbrace$  and $\lbrace \delta_n | \delta_n \in \bR, n=0,1,..N-1 \rbrace$ such that 
	\begin{align}
	d(\mu^{N,\cdot}_{N}, \mu_{N}) & \leq \mathlarger{\sum}^{N-1}_{i=0} (2 \kappa^{-2})^i \prod^{i-1}_{j=0} C_{N-j} \bigg( \frac{C}{M_{N-i}} + 2 \delta_{N-i}+ \frac{CM_{N-i}}{\sqrt{K_{N-i}}} +\frac{3}{\sqrt{\caln}} \bigg)
		\end{align}
		where $C_j:=1+C \Delta t L_j M_j$.
		
Then, for any $M > 0$, we have  by picking $\lbrace M_n \rbrace$, $\lbrace K_n \rbrace$, $\caln$ large enough and $\lbrace \delta_n \rbrace$ small enough, then the following hold
\begin{align}
	d(\mu^{N,\cdot}_{N}, \mu_{N}) \leq \frac{C}{M}
\end{align}
for some fixed constant $C$ which depend only on $\kappa$. 
\end{theorem}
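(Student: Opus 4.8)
The plan is to treat Theorem \ref{theo:convergence} as the resolution of the linear recursion supplied by Lemma \ref{onestep}, followed by an order-sensitive tuning of the free parameters. Write $d_n := d(\mu^{N,\cdot}_n,\mu_n)$. Because we take $\mu^N_0=\mu^0$ there is no error at the initial time, so $d_0=0$, and Lemma \ref{onestep} gives, for each $n=0,\dots,N-1$, a bound of the form $d_{n+1}\le 2\kappa^{-2}\bigl(C_n d_n + r_n\bigr)$, where $C_n = 1+C\Delta t L_n M_n$ and $r_n = C/M_n + 2\delta_n + CM_n/\sqrt{K_n} + 3/\sqrt{\caln}$.

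First I would unroll this recursion: iterating from $n=0$ and using $d_0=0$ shows that the error injected at step $N-i$ is amplified by $(2\kappa^{-2})^i$ together with the product of the intervening factors $C_{N-j}$, which is exactly the displayed estimate $d_N\le\sum_{i=0}^{N-1}(2\kappa^{-2})^i\prod_{j=0}^{i-1}C_{N-j}\,r_{N-i}$ (up to the obvious re-indexing of the recurrence). This step is pure induction and bookkeeping; no new ideas enter.

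The substantive step is the parameter selection. For fixed $N$ the right-hand side is a finite sum of $N$ terms, and we must push it below a prescribed tolerance $C/M$, allocating to each of the four pieces $C/M_{N-i}$, $2\delta_{N-i}$, $CM_{N-i}/\sqrt{K_{N-i}}$, $3/\sqrt{\caln}$ a fixed share of the budget. The delicacy is the coupling introduced by the \emph{Lusin}/density-of-Lipschitz argument inside Lemma \ref{onestep}: a Lipschitz constant $L_n$ is produced only after both the truncation radius $M_n$ and the uniform approximation accuracy $\delta_n$ are fixed, it may be arbitrarily large, and it then re-enters the amplification factor $C_n=1+C\Delta t L_n M_n$. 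I would break the circularity by choosing parameters backward in time, $n=N-1,N-2,\dots,0$: when step $n$ is reached, all of $L_j,M_j$ for $j>n$ — hence all the $C_j$ with $j>n$ — are already fixed, so the coefficient $A_n$ multiplying $r_n$ in the sum is a known finite number; I then take $M_n$ large enough that $A_n\,C/M_n$ is within its share, take $\delta_n$ small enough that $2A_n\delta_n$ is within its share, and accept the resulting finite $L_n$, which only affects the coefficients of the still-unprocessed terms $r_{n'}$, $n'<n$. After the backward sweep every $C_n,M_n,\delta_n,L_n$ is a fixed finite constant, and it remains only to take $\caln$ large enough to absorb all the $3/\sqrt{\caln}$ contributions and each $K_n$ large enough (of order $M_n^2$ times the square of its coefficient) to absorb $CM_n/\sqrt{K_n}$; these final choices feed back into nothing. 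Summing the four budget shares yields $d_N\le C/M$.

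I expect the main obstacle to be precisely this interdependence of $\delta_n$, $L_n$ and $C_n$: one cannot naively send $\delta_n\to 0$, since that inflates $L_n$ and therefore the amplification constants in front of every earlier-time error term. The backward ordering of the choices resolves it, and it is essential that $N$ is held fixed throughout — this keeps the recursion finite, so that however large the constants produced along the cascade become, they remain finite and the $N$-term sum stays controllable. A secondary and purely cosmetic issue is matching the index conventions in the displayed formula to the natural unrolling of the recurrence; I would settle on one convention at the start.
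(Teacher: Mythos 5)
Your proposal is correct and takes essentially the same route as the paper: unroll the one-step recursion of Lemma \ref{onestep} using $d(\mu^N_0,\mu_0)=0$ to kill the homogeneous term, then fix the parameters by a backward-in-index cascade that respects the dependence of $L_n$ (hence $C_n$) on the already-chosen $\delta_n$ and $M_n$. The paper implements exactly this backward iteration through its recursive definitions of $\delta_l$ and $M_l$ (equations \eqref{rep1}--\eqref{rep2}), so the only difference is cosmetic — equal-contribution telescoping versus your budget-share allocation.
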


%\end{tcolorbox}
%\textbf{Proof for Theorem \ref{theo:convergence} } 
\begin{proof}
	With $C_n$ defined as 
\begin{align}
	C_n:=1+C \Delta t L_n M_n
\end{align}
And by using \eqref{recursive_start} repeatedly, we obtain the following result: 
\begin{align}
	d(\mu^{N,\cdot}_{N}, \mu_{N}) & \leq \mathlarger{\sum}^{N-1}_{i=0} (2 \kappa^{-2})^i \prod^{i-1}_{j=0} C_{N-j} (\frac{C}{M_{N-i}}+\frac{CM_{N-i}}{K_{N-i}}+2 \delta_{N-i}+\frac{3}{\sqrt{\caln}})+(2 \kappa^{-2})^{N} \prod^{N-1}_{j=0} C_{N-j} d(\mu^N_0, \mu_0) \\ 
	&\leq \mathlarger{\sum}^{N-1}_{i=0} (2 \kappa^{-2})^i \prod^{i-1}_{j=0} C_{N-j} (\frac{C}{M_{N-i}}+\frac{CM_{N-i}}{K_{N-i}}+2 \delta_{N-i}+\frac{3}{\sqrt{\caln}}) \label{show_zero}
\end{align}
Since we know that $d(\mu^N_0, \mu_0)=0$. 
Now, we just need to show that \eqref{show_zero} vanishes when $K_{l}, N$ gets large and $\delta_{i}$ gets small, $i \in \lbrace 0,1,...,N \rbrace$. 
Notice that $M_l$ comes from the domain truncation for each time step and $\delta_l$ comes from the uniform approximation which are free to choose. The choice of $\delta_l$ will potentially determine the value of $L_n$. 

We fix $M_N:=NM, \delta_N:=\frac{1}{NM}$ where $N$ is the number of time discretization and $M$ is potentially a large number. 

Then, we define $\delta_l$, $M_{l}$ through the following:
\begin{align}
(2 \kappa^{-2})^{i+1} \prod^{i}_{j=0} C_{N-j}2 \delta_{N-i-1} &=(2 \kappa^{-2})^{i} \prod^{i-1}_{j=0} 2 \delta_{N-i}\label{rep1}  \\ 
(2 \kappa^{-2})^{i+1} \prod^{i}_{j=0} C_{N-j} \frac{C}{M_{N-i-1}}&=(2 \kappa^{-2})^{i} \prod^{i-1}_{j=0} C_{N-j} \frac{C}{M_{N-i}} \label{rep2}
\end{align}
Here we define $C_{N+1} \equiv 1$.

Notice that should iterate \eqref{rep1} and \eqref{rep2} iteratively, since defining $\delta_i$ will lead to the lipschitz constant $L_i$ at stage $i$, which is needed for the definition for $C_i$.

Then we have that 
\begin{align}
	\mathlarger{\sum}^{N-1}_{i=0} (2 \kappa^{-2})^i \prod^{i-1}_{j=0} C_{N-j} \frac{C}{M_{n-i}} &\leq N \frac{C}{NM} \nonumber \\
	& \leq  \frac{C}{M}
\end{align}
And we also have 
\begin{align} \mathlarger{\sum}^{N-1}_{i=0} (2 \kappa^{-2})^i \prod^{i-1}_{j=0} C_{N-j} 2 \delta_{n-i} &\leq N \frac{1}{N M} \nonumber \\ 
	& \leq \frac{1}{M}
\end{align}
By picking $K_{N-i}$ to be large, we then can have 
\begin{align}
	\mathlarger{\sum}^{N-1}_{i=0} (2 \kappa^{-2})^i \prod^{i-1}_{j=0} C_{N-j} \frac{CM_{N-i}}{K_{N-i}} &\leq N \frac{1}{N M} \nonumber \\ 
	& \leq \frac{1}{M}
\end{align}
Last but not least, by taking $\caln$ so large such that 
\begin{align}
	(\mathlarger{\sum}^{N-i}_{i=0} (2 \kappa^{-2})^i \prod^{i-1}_{j=0} C_{N-j}) \frac{3}{\sqrt{\caln}} \leq \frac{1}{M}
\end{align}
we can see that \eqref{show_zero} converges to 0 by taking $M$ to be very large. 
\end{proof}

\begin{remark}
Notice that in Theorem \ref{theo:convergence}, it is natural to have terms that depend on $\frac{1}{K_n}$ and $\frac{1}{\caln}$. The presence of $M_n$ and $\delta_n$ are due to technical difficulties. $M_n$ basically gives the growth of the particles in the worst case scenario (we want our domain to be compact), while $L_n$ and $\delta_n$ comes from the Lipchitz approximation for the test function $f$.
\end{remark}

\section{Numerical example}
In this section, we carry out two numerical examples. In the first example, we consider a classic linear quadratic optimal control problem, in which the optimal control can be derived analytically. We use this example as a benchmark example to show the baseline performance and the convergence trend of our algorithm. In the second example, we solve a more practical Dubins vehicle maneuvering problem, and we design control actions based on bearing angels to let the target vehicle follow a pre-designed path.

\subsection{Example 1. Linear quadratic control problem with nonlinear observations}

Assume  B,  K are symmetric, positive definite.
The forward process $Y$ and the observation process $M$ is given by
\begin{equation}
    \label{eq:turestate} 
    \begin{aligned}
        dY(t)&= A(u(t)-r(t))dt +\sigma B u(t)dW_t\\
        dM(t) &= \sin(Y(t))dt + dB_t
    \end{aligned}
\end{equation}
The cost functional is given by
\begin{equation}
    \label{eq:cost} 
    J[u]=\frac{1}{2} E \left[ \int_0^T \langle R(Y_t-Y^{\ast}_t),(Y_t-Y^{\ast}_t) \rangle dt    + \frac{1}{2}\int_0^T\langle Ku_t,u_t \rangle dt  + \frac{1}{2} \langle QY_T,Y_T \rangle \right]
\end{equation}
we want to find $J(u^{\ast}) = \inf_{u \in \mathcal{U}_{ad} [0,T]} J(u)$.

\subsubsection{Experimental design} 
An interesting fact of such example is that one can construct a time deterministic exact solution which depend only on $x_0$.

By simplifying (\ref{eq:cost}), we have
\begin{equation}
    \label{eq:J1} 
    J[u]=\frac{1}{2} \int_0^T( RE [Y_t^T Y_t] - 2RY^{\ast T}_t E[Y_t] + RY^{\ast T}Y^{\ast} +\langle Ku_t,u_t \rangle dt) + \frac{1}{2}E[\langle QY_T,Y_T \rangle]
\end{equation}
Then, we define:
\begin{equation}
\begin{aligned}
\label{eq:J2} 
    X_t &= E[Y_t]=E[Y_0 + \int_0^T A(u(s)-r(s)) ds + \int_0^T \sigma B u(s)dW_s]\\
    &=E[Y_0 + \int_0^T A(u(s)-r(s) ds]
\end{aligned}
\end{equation}

Hence, we see that
\begin{equation}
\begin{aligned}
\label{eq:J3} 
    E [Y_t^T Y_t] &= E[(Y_0 + \int_0^t A(u(s)-r(s)) ds + \int_0^t \sigma B u(s)dW_s)^2]\\
    &=E[Y_0^T Y_0 + \int_0^t (u(s)-r(s))^TA^TA(u(s)-r(s) ds + Y_0^T\int_0^t A(u(s)-r(s)) ds \\
    &+ \int_0^t (u(s)-r(s))^TA^T Y_0 ds ] + E[\int_0^t \sigma^2u(s)^TB^TBu(s)ds]\\
    &=X_t^TX_t + \sigma^2\int_0^t u(s)^TB^TBu(s)ds
\end{aligned}
\end{equation}
And (\ref{eq:J3}) is true because all the terms are deterministic in time given $x_0$. Also, we observe that
\begin{equation}
\begin{aligned}
\label{eq:J4} 
    E [Y_T^T Y_T] &= E[(Y_0 + \int_0^T A(u(s)-r(s)) ds + \int_0^T \sigma B u(s)dW_s)^2]\\
    &=X_T^TX_T + \sigma^2\int_0^T u(s)^TB^TBu(s)ds
\end{aligned}
\end{equation}

As a result, we see that now (\ref{eq:J1}) takes the form:
\begin{equation}
\begin{aligned}
\label{eq:J5} 
    J[u]=&\frac{1}{2} \int_0^T(X_s^TRX_s - 2X_s^TRX_s^{\ast} +X_s^{\ast T}RX_s^{\ast} + u_s^T(\sigma^2 B^TQB+K)u_s)ds \\&+\frac{1}{2} \sigma^2 \int_0^T \int_0^t u_s^TB^TRBu_s dsdt + \frac{1}{2} X_T^TQX_T
    \end{aligned}
\end{equation}
by doing a simple integration by part, we have
\begin{equation}
\begin{aligned}
\label{eq:J6} 
    J[u]=&\frac{1}{2} \int_0^T(X_s^TRX_s - 2X_s^TRX_s^{\ast} +X_s^{\ast T}RX_s^{\ast} + u_s^T(\sigma^2 B^TQB+K)u_s)ds \\&+\frac{1}{2} \sigma^2 \int_0^T (T-t) u_s^TB^TRBu_s ds + \frac{1}{2} X_T^TQX_T
    \end{aligned}
\end{equation}
As a result, we have the following standard deterministic control problem:
\begin{equation}
\begin{aligned}
\label{eq:J7} 
    J[u]=&\frac{1}{2} \int_0^T \underbrace{(X_s^TRX_s - 2X_s^TRX_s^{\ast} +X_s^{\ast T}RX_s^{\ast} + u_s^T(\sigma^2 B^TQB+K)u_s + \sigma^2 (T-t) u_s^TB^TRBu_s)}_{2f} ds \\&+ \frac{1}{2} X_T^TQX_T
    \end{aligned}
\end{equation}
\begin{equation}
\label{eq:J8} 
\frac{dX_t}{dt}=  \underbrace{A(u(t)-r(t))}_{b}, X_{t_0} = X_0
\end{equation}
Then, one can form the following Hamiltonian
\begin{equation}
\label{eq:J9} 
H(x,p,u)=  bp + f
\end{equation}
where $p$ is $\nabla_x v$, $v$ is the value function.

Then, to find the optimal control, we have
\begin{align}
\label{eq:J10}
\frac{\partial}{\partial u} H = 0
\end{align}
which is 
\begin{align}
\label{eq:J11}
Ap + u(\sigma^2B^TRB(T-t)+(K+\sigma^2B^TQB)) = 0
\end{align}
So, we got
\begin{align}
\label{eq:J12}
u_t = \frac{-Ap(t) }{\sigma^2B^TRB(T-t)+(K+\sigma^2B^TQB)}
\end{align}
Also, notice that
\begin{align}
\label{eq:J13}
\frac{d }{dt}p(t) = -R(X_t-X_t^{\ast}), \;\;\;\; p(T) = QX_T
\end{align}
then,
\begin{align}
\label{eq:J14}
p(t) = QX_T + \int_0^T R(X_s-X_s^{\ast})ds
\end{align}
Combine (\ref{eq:J8}),(\ref{eq:J12}),(\ref{eq:J14}) together, we can solve the control of the system.

Then, set
\begin{align}
\label{eq:J15}
    \mathbf A = \begin{bmatrix} 1 & 0.2 & 0.2 & 0.2 \\  0.2 & 1 & 0.2 & 0.2 \\  0.2 & 0.2 & 1 & 0.2 \\  0.2 & 0.2 & 0.2 & 1 \end{bmatrix}
\end{align}
Set B, R, K ,Q be identity matrices. With $X_0=0$, we have the following solution according to this setup. 

To solve (\ref{eq:J14}), let 
\begin{equation*}
\label{eq:J17}
    \begin{aligned}
        X_t^1-X_t^{\ast 1} &:= t \\
        X_t^2-X_t^{\ast 2} &:= cos(t) \\
        X_t^3-X_t^{\ast 3} &:= t^{2} \\
        X_t^4-X_t^{\ast 4} &:= -2\pi sin(2\pi t) 
    \end{aligned}
\end{equation*}
Then we have
\begin{align}
\label{eq:J18}
    p(t) = \begin{bmatrix} X_t^1 \\X_t^2 \\X_t^3 \\ X_t^4\end{bmatrix} + \begin{bmatrix} \frac{T^2}{2}-\frac{t^2}{2} \\ sin(T)-sin(t) \\  \frac{T^3}{3}-\frac{t^3}{3}\\  cos(2\pi T)-cos(2\pi t) \end{bmatrix}
\end{align}

Let $\hat{r}(t)$ as 
\begin{equation*} \label{eq:J19}
\hat{r}_t^1= \frac{-t^2/2}{\beta_t}, \quad \hat{r}_t^2= \frac{-sin(t)}{\beta_t} , \quad \hat{r}_t^3= \frac{-t^3}{3\beta_t}, \quad \hat{r}_t^4= \frac{-cos(2\pi t)}{\beta_t}
\end{equation*}
where $ \beta_t = (1+\sigma^2)+\sigma^2 (T-t) $.
then 
$$r(t)=A\hat{r}(t)$$

Then, plug (\ref{eq:J18}) into (\ref{eq:J12}), then we solve (\ref{eq:J8})
\begin{align}
\label{eq:J20}
\frac{dX_t}{dt} &=  A(u(t)-r(t)) =A(-\frac{Ap(t)}{\beta_t}- A\hat{r}(t)))
\end{align}
we get
\begin{align}
\label{eq:J21}
X_t = \begin{bmatrix} X_t^1 \\X_t^2 \\X_t^3 \\ X_t^4\end{bmatrix} = \frac{\alpha_t A^2}{\sigma^2 }(\begin{bmatrix} \frac{T^2}{2} \\sin(T) \\\frac{T^3}{3} \\ cos(2\pi T)  \end{bmatrix}- \begin{bmatrix} X_T^1 \\X_T^2 \\X_T^3 \\ X_T^4\end{bmatrix})
\end{align}
So, replace $X_t$ with $X_t^{\ast}$, we get

\begin{align}
X_t^{\ast} =  \begin{bmatrix} X_t^{1,\ast} \\X_t^{2,\ast} \\X_t^{3,\ast} \\ X_t^{4,\ast}  \end{bmatrix}  =\begin{bmatrix} t \\cos(t) \\t^2 \\
-2 \pi sin(2\pi t)\end{bmatrix} + \frac{\alpha_t A^2}{\sigma^2 }(\begin{bmatrix} \frac{T^2}{2} \\sin(T) \\\frac{T^3}{3} \\ cos(2\pi T)  \end{bmatrix} - \begin{bmatrix} X_T^1 \\X_T^2 \\X_T^3 \\ X_T^4\end{bmatrix})
\end{align}
where $X_T^i$ can be obtained from the system (\ref{eq:J21}) by letting $t=T$, and 
$$\alpha_t = ln\frac{1+\sigma^2+\sigma^2 T}{(1+\sigma^2)+\sigma^2(T-t)}$$

Then, to find an the exact form by following the trajectory of $y_t$ in this setup, one will have to solve the following Coupled forward-backward ODE.

\begin{align}
	\frac{d X_t}{dt }&=  A(u_t-r_t) \ \ \ \ \ x_{n}=y_{t_n}\\ 
	\frac{d p(t)}{dt }&=X_t-X^*_t, \ \ \ p_T=x^{t_n, y_{t_n}}_T
\end{align}
with $u_t =-A p_t \big/\big( \sigma^2(T-t)+(1+\sigma^2) \big)$. 
As a result, we have
\begin{align}
	\frac{d X_t}{dt }&=  A(-Ap_t \big/\big( \sigma^2(T-t)+(1+\sigma^2) \big)-r_t) \ \ \ \ \ x_{n}=y_{t_n}\\ 
	\frac{d p(t)}{dt }&=X_t-X^*_t, \ \ \ p_T=x^{t_n, y_{t_n}}_T
\end{align}
That is, we need to solve the above coupled FBODE. Then, seeing that $p_t=x^{t_n, y_{t_n}}_T + \int^T_{t_n} X_s-X^*_s ds$. Writing $a_t:=1/\big( \sigma^2(T-t)+(1+\sigma^2) \big)$, we have 
\begin{align}\label{fbode}
	\frac{d X_t}{dt }&=A\big(-a_t A X_T-a_t A\int^T_{t_n} (X_s-X^*_s) ds -r_t \big),  \ \ \ \ \ x_{t_n}=y_{t_n}
\end{align}
To solve \eqref{fbode} numerically, we do a numerical discretization: 
\begin{align}
	x_{t_{n+1}}-x_{t_n}&=-a_{t_n}A^2X_T \Delta t-a_{t_n} (\Delta t)^2 A^2 \sum^{N-1}_{i=n}(X_{t_i}-X^*_{t_i}) - A r_t \nonumber \\ 
	 \Rightarrow  A r_t -a_{t_n} (\Delta t)^2 A^2\sum^{N-1}_{i=n} X^*_{t_i}&=X_{t_n}-X_{t_{n+1}}-a_{t_n} (\Delta t)^2 A^2 \sum^{N-1}_{i=n}X_{t_i}-a_{t_n}A^2X_T, \ \ \ x_{t_n}=y_{t_n} \label{fbode-num}
\end{align}
We can put \eqref{fbode-num} into a large linear system, and solve it numerically.

\subsubsection{Performance experiment}
We set the total number of discretization to be $N=50$.  Set iteration $L=10^4, \sigma=0.1$, number of particles in each dimension is 128, $T=1$, $X_0 = 0$.

In Figure \ref{fig1}, we present the estimated data driven control and the true optimal control, 
\begin{figure}[H]
    \centering
    \includegraphics[width=15cm]{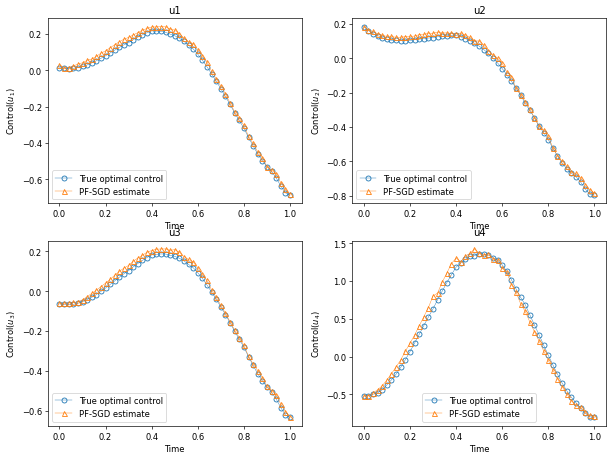} %\hfill
        \caption{Estimated control vs True optimal control}
        \label{fig1}
\end{figure}
and in Figure \ref{fig4dpath}, we show the estimated state trajectories with respect to true state trajectories in each dimension.
\begin{figure}[H]
    \centering
    \includegraphics[width=15cm]{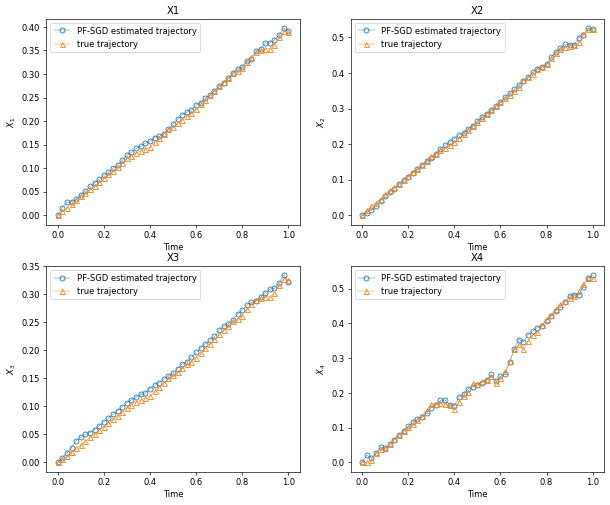} %\hfill
        \caption{Estimated state vs True state}
        \label{fig4dpath}
\end{figure}
We can see from those Figures that our data driven feedback control algorithm works very well for this 4-D linear quadratic control problem despite of nonlinear observations.

\subsubsection{Convergence experiment}
In this experiment, we demonstrate the convergence performance of our algorithm, and we study the error decay of the algorithm in the $L_2$ norm with respect to the number of particles used. Each result is an average of $||u^{est}-u^{\ast}||_2$ of 50 independent tests.

Specifically, we set $L=10^4$ and we just increase the number of particles  $S = \{2,8,32,128,512,2048,4096,8192,$ $16384,32768\}$, we got the result in (figure \ref{fig3})
\begin{figure}[H]
    \centering
    \includegraphics[width=8cm]{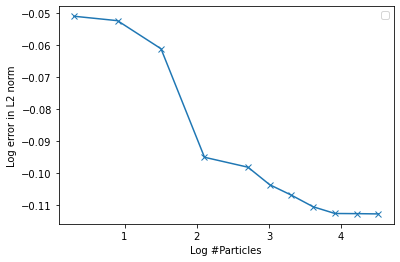} %\hfill
        \caption{Error VS Number of Particles}
        \label{fig3}
\end{figure}

Set number of particles $S = \{2,8,32,128,512,1024,2048,4096\}$, $L = S^2$. We got the result in (figure \ref{fig2}).

\begin{figure}[H]
    \centering
    \includegraphics[width=8cm]{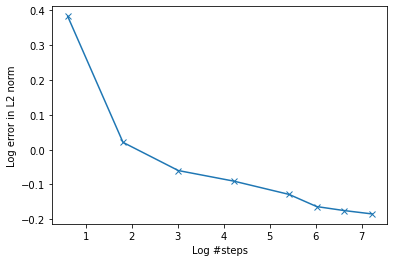} %\hfill
        \caption{Error VS Number of Steps}
        \label{fig2}
\end{figure}

From the results above, we can see that the error will decrease and converge as we increase the number of particles and the number of iterations.

\subsection{Example 2. 2D Dubins vehicle maneuvering problem}

In this example, we solve a Dubins vehicle maneuvering problem. The controlled
process is described by the following nonlinear controlled dynamics:
\begin{align}
	dS_t &=\begin{bmatrix} dX_t \\dY_t \end{bmatrix} = \begin{bmatrix} \sin(\theta_t)dt \\ \cos(\theta_t)dt \end{bmatrix} + \sigma dW_t \\  d \theta_t &= u_t dt + \sigma^2 dW_t \\
 dM_t &= [\arctan(\frac{X_t+1}{Y_t-1}), \arctan(\frac{X_t-2}{Y_t-1})]^T + \eta_t
	  \label{dubinscar}
\end{align}
where the pair $(X, Y)$ gives the position of a car-like robot moving in the 2D plane, $\theta$ is
the steering angle that controls the moving direction of the robot, which is governed by the
control action $u_t$, $\sigma$ is the noise that perturbs the motion and control actions. Assume that we do not have direct observations on the robot. Instead, we use two detectors
located on different observation platforms at $(-1, 1)$ and $(2, 1)$ to collect bearing angles of
the target robot as indirect observations. So, we have the observation process $M_t$ Given the expected path $S^{\ast}$, the car should follow it and arrive the terminal position on time. The performance cost functional based on observational
data that we aim to minimize is defined as:
\begin{equation}
    \label{eq:cost2d} 
    J[u]= E \left[ \frac{1}{2}\int_0^T \langle R(S_t-S^{\ast}_t),(S_t-S^{\ast}_t) \rangle dt    + \frac{1}{2}\int_0^T\langle Ku_t,u_t \rangle dt  + \langle Q(S_T-S^{\ast}_T),(S_T-S^{\ast}_T) \rangle \right]
\end{equation}

In our numerical experiments, we let the car start from $(X_0,Y_0)=(0,0)$ to $(X_T,Y_T)=(1,1)$. The expected path $S^{\ast}_t$ is $X_t^2+Y_t^2=1$. Other settings are $T=1$, $\Delta t  =0.02$ i.e. $N_T=50$, $\sigma=0.1$, $\eta_t \sim N(0,0.1)$, $L=1000$, $K=1$ and the initial heading direction is $\pi/2 $. To emphasize the importance of following the expected path and arriving at the target location at the terminal time, let $R=Q=20$.
\begin{figure}[H]
    \centering
    \includegraphics[width=9cm]{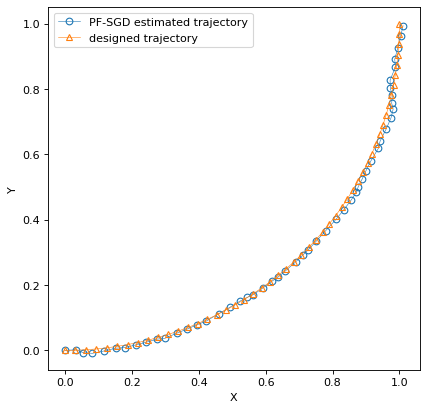} %\hfill
    \caption{Controlled trajectory from (0,0) to (1,1)}
    \label{fig5}
\end{figure}

In figure \ref{fig5}, we plot our algorithm's designed trajectory and the estimated trajectory.
We can see from this figure that the car moves
towards the target along the designed path and is “on target” at the final time with a very small error.

If we set $L=10^3$ and we just increase the number of particles $S = \{2,8,32,128,512,1024,2048,$ $4096,8192,16384,32768\}$. To provide the convergence of our algorithm in solving this Dubins vehicle maneuvering problem, we repeat the above experiment 50 times and we got the error = $\frac{1}{M_{rept.}}\sum_{m=1}^{M_{rept.}}\langle (S_t-S^{\ast}_t),(S_t-S^{\ast}_t) \rangle$ in (figure \ref{fig6}) where $M_{rept.}=50$
\begin{figure}[h]
    \centering
    \includegraphics[width=8cm]{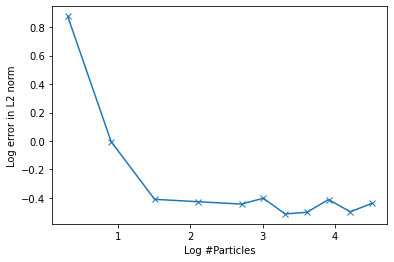} %\hfill
        \caption{Error VS Number of Particles}
        \label{fig6}
\end{figure}

Set number of particles $S = \{8,16,32,64,128,256,512,1024\}$, $L = S^2$. We got the error=$\frac{1}{M_{rept.}}\sum_{m=1}^{M_{rept.}}$ 
$\langle (S_t-S^{\ast}_t),(S_t-S^{\ast}_t) \rangle$  in (figure \ref{fig7}), where the error is average of $||S_t-S^{\ast}_t||_2$.

\begin{figure}[H]
    \centering
    \includegraphics[width=8cm]{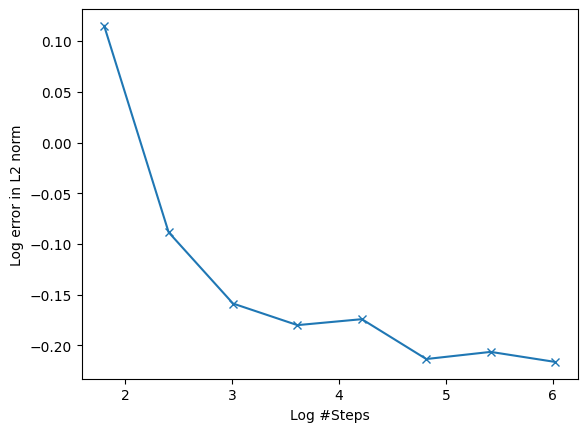} %\hfill
        \caption{Error VS Number of Steps}
        \label{fig7}
\end{figure}

From the results above, we can see that the error will decrease and converge as we increase the number of particles and the number of iterations.

\bibliographystyle{plain}
%\bibliography{myrefs}

\newpage
\section{Appendix}
%\subsection{Other auxiliary lemmas}
\begin{lemma} \label{j_bound}
There exist $C$ such that
	\begin{align}
		\E[||j'^{x}(U_n)||^2] \leq C|x|^2+C
	\end{align}
\end{lemma}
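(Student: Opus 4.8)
The plan is to unwind the definition of the single-realization gradient process and reduce the claim to moment bounds on the discretized forward and backward states. Recall that along the Euler path (\ref{PFSGD:EQ19}) started at $X_{t_n}=x$ and the backward recursion (\ref{PFSGD:EQ20}) one has, for $n\le i\le N$,
\[
j'^x(U_n)\big|_{t_i}=b_u^{\top}(t_i,X_{t_i},U_n|_{t_i})\,Y_i+\sigma_u^{\top}(t_i,X_{t_i},U_n|_{t_i})\,Z_i+f_u^{\top}(t_i,X_{t_i},U_n|_{t_i}),
\]
and $\|j'^x(U_n)\|^2=\sum_{i=n}^{N}\Delta t\,|j'^x(U_n)|_{t_i}|^2$. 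Since $b_u,\sigma_u,f_u$ are uniformly bounded, it suffices to prove $\E[|Y_i|^2]+\E[|Z_i|^2]\le C(|x|^2+1)$ for each $i$, with $C$ allowed to depend on the fixed discretization $N$ but not on the control (which is $\calg_K$-measurable, hence possibly random, but enters only through the uniformly bounded coefficients). This is precisely the form in which the bound is invoked inside the proof of Lemma \ref{SGD_proof}, where the dependence on $N$ is absorbed into $C$.

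First I would establish the forward estimate: from (\ref{PFSGD:EQ19}) with $b,\sigma$ bounded uniformly in $(x,u)$ and $\omega_i\sim N(0,1)$, Minkowski's inequality and a discrete recursion give $\E[|X_{t_i}|^p]\le C_p(|x|^p+1)$ for every $p\ge1$, uniformly in $U_n$. Next I would treat the backward equation by downward induction on $i$, proving the stronger statement $\E[|Y_i|^p]\le C_{i,p}(|x|^p+1)$ for all $p\ge1$ simultaneously: for $i=N$ this is the terminal condition $Y_N=(g_x(X_N))^{\top}$ together with the at-most-linear growth assumed for the terminal-cost gradient and the forward estimate; for the step, Hölder's inequality applied to $Z_{i+1}=Y_{i+2}\,\omega_{i+1}/\sqrt{\Delta t}$ yields $\|Z_{i+1}\|_{L^p}\le \Delta t^{-1/2}\|Y_{i+2}\|_{L^{2p}}\|\omega_{i+1}\|_{L^{2p}}\le C(|x|+1)$, and plugging this with the uniform bounds on $b_x,\sigma_x,f_x$ into the $Y$-recursion of (\ref{PFSGD:EQ20}) gives $\|Y_i\|_{L^p}\le C(|x|+1)$. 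Finally $\E[|Z_i|^2]\le \Delta t^{-1}\|Y_{i+1}\|_{L^4}^2\|\omega_i\|_{L^4}^2\le C(|x|^2+1)$ by the $p=4$ case, and summing the at most $N+1$ terms in $\|j'^x(U_n)\|^2$ closes the argument.

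The main obstacle is the representation $Z_i=Y_{i+1}\,\omega_i/\sqrt{\Delta t}$: because $\omega_i$ is not independent of the later forward path $\{X_{t_j}\}_{j\ge i+2}$ (hence not of $Y_{i+1}$), one cannot factor $\E[|Y_{i+1}|^2|\omega_i|^2]$, which is why all $L^p$ moments must be propagated together through the backward recursion and why the resulting constant inevitably grows with $\Delta t^{-1}$, i.e. with $N$. Everything else is routine: uniformity in $U_n$ is automatic since every bound used is uniform in the control argument, and the linear-growth hypothesis on the terminal data enters only at $i=N$.
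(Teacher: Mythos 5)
Your proposal is correct in substance but takes a genuinely different, and more self-contained, route than the paper. The paper's proof is essentially two lines: using the abbreviated definition of the gradient in Notation~10 (which contains only the $f_u$ and $b_u^{\top}Y$ terms, \emph{not} $\sigma_u^{\top}Z$), it bounds $\|j'^x(U_n)\|^2\le C\sup_{j}|Y_j^x|^2+C$ by boundedness of $b_u,f_u$, and then cites Lemma~3.3 of \cite{archibald2020STOCHASTIC} together with Theorems~4.2.1 and~5.3.3 of \cite{zhang2017backward} for the estimate $\E[\sup_j|Y_j^x|^2]\le C|x|^2+C$. You instead re-derive the moment bounds from scratch by a forward estimate on the Euler path and a downward induction propagating all $L^p$ moments of $Y_i$ through the discrete backward recursion, and you additionally control the $\sigma_u^{\top}Z$ term that appears in the full gradient \eqref{PFSGD:EQ12} but is absent from the paper's working definition. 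What your approach buys is independence from external references and an honest treatment of the sample-wise $Z_i=Y_{i+1}\omega_i/\sqrt{\Delta t_i}$, including the correct observation that $\omega_i$ and $Y_{i+1}$ are not independent, so H\"older (hence higher moments) is unavoidable. What it costs is the quality of the constant: feeding $\sigma_x^{\top}Z_{i+1}\Delta t = O(\sqrt{\Delta t})\,Y_{i+2}\,\omega_{i+1}$ into the recursion and iterating produces a prefactor of order $(1+C\sqrt{\Delta t})^{N}$, which degrades (faster than polynomially) as $N\to\infty$, whereas the BSDE estimates the paper cites are uniform in the discretization. Since the lemma is only invoked under a fixed temporal discretization $N$ (and the paper itself absorbs $N$ into $C$ in Lemma~\ref{SGD_proof}), this weaker constant still suffices for the stated claim, but you should flag explicitly that your $C$ depends on $N$.
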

\begin{proof}
	Recall the definition of $j'^{x}(U_n)$, we have 
	\begin{align}
		||j'^x(U_n)||^2 & \leq C \sup_{j \in \lbrace n, ..., N \rbrace} |Y_{j}^x|^2 +C
	\end{align}
	taking expectation on both side and use Lemma 3.3 in \cite{archibald2020STOCHASTIC}, we have that 
	\begin{align}
		\E[||j'^x(U_n)||^2 ] & \leq C \E[\sup_{j \in \lbrace n, ..., N \rbrace} |Y_{j}^x|^2 ] +C \nonumber \\ 
		& \leq C|x|^2+C \label{j_bound1}
	\end{align}
	where \eqref{j_bound} follows again from estimates \textit{Theorem 4.2.1} and \textit{Theorem 5.3.3} in \cite{zhang2017backward}. 

\end{proof}

%\newpage
%\begin{thebibliography}{9}
%\bibitem{kody}Kody Law, Andrew Stuart, Konstantinos Zygalakis. \textit{Data Assimilation: A mathematical introduction.}	
%\bibitem{Jianfeng} Jianfeng Zhang. \textit{Backward Stochastic Differential equations, From Linear to Fully Nonlinear Theory} Probability Theory and Stochastic Modeling 86. Springer. 
%\bibitem{Bao} Richard Archibald, Feng Bao, Yanzhao Cao, He Zhang. \textit{Uncertainty Quantification for Stochastic Neural Network through Stochastic Maximum Principle}. Preprint
%
%\bibitem{main}Bo Gong, Wenbin Liu, Tao Tang, Weidong Zhao, Tao Zhou \textit{An efficient gradient projection method for stochastic optimal control problem}, SIAM J Numer Anal. Vol.55, No. 6 pp 2982-3005. 
%
%\bibitem{Nem} A.Nemirovski, A.Juditsky, G.Lan, A.Shapiro. \textit{Robust Stochastic Approximation Approach to Stochastic Programming}. SIAM J. OPTIM. Vol 19, No. 4, pp.1574-1609. 
%
%\end{thebibliography}
%
%
%

%\printbibliography

\end{document}